\newtheorem{defn}{Definition}
\newtheorem{thm}{Theorem}
\newtheorem{lem}{Lemma}
\newtheorem{prop}{Proposition}
\newtheorem{cor}{Corollary}
\newtheorem{rem}{Remark}
\newtheorem{exam}{Example}
\newtheorem{assump}{Assumption}
\newcommand{\R}{\mathbb{R}}
\newcommand{\N}{\mathbb{N}}
\newcommand{\X}{\mathcal{X}}
\newcommand{\G}{\mathcal{G}}
\newcommand{\E}{\mathcal{E}}
\newcommand{\V}{\mathcal{V}}
\newcommand{\U}{\mathcal{U}}
\newcommand{\W}{\mathcal{W}}
\newcommand{\p}{\mathrm{p}}
\newcommand{\h}{\mathrm{h}}
\newcommand{\pe}{\mathrm{pe}}
\newcommand{\ext}{\mathrm{ext}}
\newcommand{\LL}{\mathcal{L}}
\journal{Applied Energy}
\begin{document}

\begin{frontmatter}
\title{Economic Dispatch of a Single Micro Gas Turbine Under CHP Operation with Uncertain Demands}
\author[KTH]{Miel~Sharf\corref{CorAut}} \ead{sharf@kth.se}
\author[IIT]{Iliya~Romm} \ead{iliya@technion.ac.il}
\author[IIT]{Michael~Palman} \ead{p.michael@technion.ac.il}
\author[IIT]{Daniel~Zelazo} \ead{dzelazo@technion.ac.il}
\author[IIT]{Beni~Cukurel} \ead{beni@cukurel.org}

\address[KTH]{Division of Decision and Control Systems, KTH Royal Institute of Technology, and Digital Futures. 10044 Stockholm, Sweden.}

\address[IIT]{Department of Aerospace Engineering, Technion - Israel Institute of Technology, Haifa, Israel.}

\cortext[CorAut]{Corresponding Author.}

\begin{abstract}
This work considers the economic dispatch problem for a single micro gas turbine, governed by a discrete state-space model, under combined heat and power (CHP) operation and coupled with a utility. If the exact power and heat demands are given, existing algorithms can be used to give a quick optimal solution to the economic dispatch problem. However, in practice, the power and heat demands can not be known deterministically, but are rather predicted, resulting in an estimate and a bound on the estimation error. We consider the case in which the power and heat demands are unknown, and present a robust optimization-based approach for scheduling the turbine's heat and power generation, in which the demand is assumed to be inside an uncertainty set. We consider two different choices of the uncertainty set relying on the $\ell^\infty$- and the $\ell^1$-norms, each with different advantages, and consider the associated robust economic dispatch problems. We recast these as robust shortest-path problems on appropriately defined graphs. For the first choice, we provide an exact linear-time algorithm for the solution of the robust shortest-path problem, and for the second, we provide an exact quadratic-time algorithm and an approximate linear-time algorithm. The efficiency and usefulness of the algorithms are demonstrated using a detailed case study that employs real data on energy demand profiles and electricity tariffs.
\end{abstract}

\begin{keyword}
Micro gas turbines \sep Combined Heat and Power (CHP) \sep Economic dispatch \sep Microgrids \sep Uncertain Demand \sep Robust Optimization
\end{keyword}

\end{frontmatter}

\section{Introduction}
In recent years, combined cycle systems, in which local consumers provide electricity, hot water and heat for themselves, have become popular \cite{Pilavachi2002}. The attractiveness of such combined heating and power (CHP) units was shown in recent studies \cite{Gu2014,Liu2014}, and the economically favorable conditions toward integrating micro gas turbines (MGT) powered CHP units into the smart-grid was examined in \cite{Mongibello2015} and \cite{Pantaleo2013}. However, these works consider a generic MGT model and do not include realistic demand profiles nor the variable pricing of electricity. More recently, \cite{Rist2017} presented a solution to the CHP economic dispatch (ED) problem for a single MGT coupled to the utility with a realistic MGT performance model and known demand, i.e., an economically-optimal schedule of the MGT was computed for a consumer generating its own power and heat. In this paper, we propose a solution to a similar economic dispatch problem for the case of unknown demand by using the framework of robust optimization.

\subsection{Micro Gas Turbines}\label{subsec.mgt}
Micro gas- turbines (MGT) offer many advantages for small-scale CHP production, such as low greenhouse gas emissions, theoretical high thermal efficiency and reduced noise. They are also capable of short start-up times and rapid transitions between partial and full-load, due to their low mechanical and thermal inertia. For these reasons, \cite{Mongibello2015} and \cite{Pantaleo2013} examined the economically favorable conditions of integrating MGTs into the smart-grid.
However, theoretical analysis of MGTs, especially in an economic framework, can be hard due to physical limitations. These include, but are not limited to:
\begin{itemize}
    \item[i)] Many MGTs can only shutdown from or startup to certain operation levels \cite{C65Capstone,AMTNikeManual}.
    \item[ii)] When some MGTs are turned off, they must be cooled down before they can be turned on again. For example, the Capstone C65 MGT must cool down for up to 10 minutes before coming online again \cite{C65Capstone}.
    \item[iii)] Even when the turbines are active, not any generation level between the maximum and minimum capacity is allowed. This is due to structural and rotordynamic resonances rendering the engines unstable or unsafe for certain rotation speeds \cite{Rist2017,Lee1993,Papageorgiou2007,FederalRegister}.
    \item[iv)] From an aerodynamic perspective, compressor blade fluttering introduces additional permeating operational boundaries, which may interrupt the continuity of engine's operating line \cite{CapeceV,Bendiksen1988}.
    \item[v)] Gas turbine emissions such as carbon monoxide (CO), unburned hydrocarbons (UHC) and nitric oxides (NOx) cause an increasing concern. Percentage of CO, HC, NOx is directly correlated to the combustor temperature, equivalent ratio and pressure \cite{LieuwenT}, which are highly variant throughout engine's operating region. Towards reducing the amount of emissions, authorities impose strict regulations on gas turbine operators, which create zones in the operating line which are undesirable. Furthermore, the majority of modern engines with reduced emissions are operating with lean combustion, which is more prone to exhibit thermoacoustic instability (interaction between an acoustic field and a combustion process that increases pressure oscillations that may even lead to complete failure of the gas turbine unit). \cite{KellerJ,TimothyC}. Then, avoiding the combustor thermoacoustic instabilities also impose additional discontinuities in engine's operational field.
\end{itemize}
Any thorough economic analysis of a system including MGTs, including economic viability of MGTs or optimal generation planning, must account for the physical limitations of the MGT.

\subsection{Economic Analysis of Power Generation}
The economic analysis of power generation is usually done by considering the Economic Dispatch (ED) optimization problem. Generally, ED considers a collection of supply mechanisms generating power and/or heat, where the goal is to schedule the machines' generation to guarantee that the demand is met, while minimizing the overall production costs \cite{Happ1977}. This paper considers the ED problem for a single MGT and a utility, from which both power and heat can be purchased\footnote{\label{note.heat_buy}Heat is not directly sold by the utility, but can be modeled as an additional fuel or electricity cost. Most consumers satisfy their heat demand with a boiler, in which case we model the heating cost with the price of natural gas.}. Thus, the ED problem must account for the physical limitations i)-iii), as well as other physical limitations that the particular MGT model might possess.

Economic dispatch has been considered for many different types of systems, including steam engines, gas turbines, and wind turbines \cite{Rist2017,Wood2013,Bertsekas1983}. Most literature on ED simulate the generators as having continuous states based on first principle modeling of the system, where the generated power and heat can take any value between a minimum and a maximum capacity, and the corresponding cost function, mapping generation level to economic cost, is assumed to be quadratic \cite{Wood2013,Bertsekas1983}. The resulting ED problem is usually solved using standard convex optimization techniques, e.g. gradient descent or dual-gradient methods \cite{Bertsekas1983,Zelazo2012}. These methods can also be combined with other techniques, e.g. consensus-based algorithms \cite{Binetti2014}. \textcolor{black}{More recent works try to apply learning-based approaches \cite{Kim2020,Zhou2020} or particle swarm optimization methods \cite{Gaing2003,Xin2020} to solve the convex optimization problem. The reader is referred to the following recent reviews on the subject for more information and references \cite{Kazda2020,Wen2021}.}

Unfortunately, this convex optimization framework fails to capture the fundamental constraints imposed by the physical limitations of MGTs, e.g. points i)-v) described in Section \ref{subsec.mgt}, unless augmented properly, for multiple reasons. First, in a low-demand scenario, not all providers should be active, so we should also schedule their startup and shutdown. This is usually done by considering the unit commitment (UC) problem \cite{Saravanan2013}. However, due to the flexibility of MGTs and their quick start up and shutdown times, as well as the physical limitation i), this decoupling will result in a wasteful scheduling policy. Therefore, we do not decouple the UC and ED problems, and instead incorporate the inactive state into the ED problem. This is usually done by introducing binary variables determining when the machines should be active \cite{Wood2013,Bertsekas1983}. Moreever, the physical limitation ii) implies we need more than one inactive state per MGT. Furthermore, the physical limitations iii-v) mean that we cannot model the generation of the turbine (when active) as a continuous variable with minimum and maximum capacities. Other turbine-specific limitations can impose additional constraints on the model.

\subsection{Shortest-Path Algorithms and Uncertain Demand}
Combining the restrictions described above, we get a model for the MGT having multiple discrete (inactive) states and complex constraints on the allowed generation level when active, meaning that the ED problem is a constrained mixed-integer problem, which can be NP-hard in general. One possible solution is to discretize the state space and cost function, which works well for complex engine models, as the fuel consumption can be computed numerically. In this setting, the combined UC and ED problem is a discrete-time optimal control problem with a discrete state-space representation for the plant. This is an integer optimization problem where generated power and heat can only take values within a finite set. A solution to this problem is available using dynamic programming, namely by using the shortest-path algorithm on an appropriately defined graph \cite{Rist2017,Cormen2009}. However, this method, as well as most other approaches for ED, assumes the demands are known throughout the time horizon \cite{Rist2017,Bertsekas1983,Zelazo2012,Hindi1991,Ross1980,Kanchev2014,Shamsi2016}, e.g. by using one of the many load forecasting techniques that appear in the literature, see e.g. \cite{Gross1987,Akay2007,Yu2017,Mirasgedis2006} and references therein.

One might try to simply ignore the issue of unknown future demands by solving the ED problem with respect to an ad-hoc estimate of the demand level. However, this approach can fail miserably, as is known that for some real-world optimization problems, the optimal solution changes drastically when some parameters in the problem change even by a minuscule amount \cite{BenTal2009,Elsayed2016}. Another approach to overcome this problem is to consider a stochastic optimization framework, in which we try and minimize the average cost of generation \cite{Zhao2010,Hetzer2008,Dhillon1993}. These require prior knowledge on the probability distribution of the underlying uncertainty, which must be estimated from past data, resulting again in the same problem of parameter inaccuracy. Another approach taken by recent studies is the incorporation of robust optimization techniques, in which the demand is assumed to be in a given set, which is known as the ``uncertainty set". \textcolor{black}{The choice of uncertainty set requires us to have knowledge about the possible values the demand can take, which again results in a problem of parameter inaccuracy. Fortunately, there is evidence that robust algorithms for general problems are significantly less vulnerable to parameter uncertainty \cite{Bertsimas2004}.}
However, robust shortest-path problems are known to be generally NP-hard \cite{Yu1998}, meaning that a careful treatment of the problem and the uncertainty set is needed to assure that the resulting optimization problem is tractable. We discuss in detail about previous results regarding robust shortest-path problems in Section \ref{sec.Background_RSPP} below, but all existing solution methods are either overconservative or suffer from very prolonged runtimes even for small graphs with a few hundred nodes. For comparison, the MGT ED problem with known demand in \cite{Rist2017} is converted to a shortest-path problem on a graph with roughly $250,000$ nodes.

\subsection{Contributions}
In this work, we consider the ED problem of a single MGT with a known discrete state-space representation, and unknown power and heat demands. The turbine is also connected to a utility from which power and heat can be purchased at a time-dependent  cost\footnotemark[\value{footnote}]. We apply the robust optimization framework for the mixed-integer ED problem, which results in a robust shortest-path problem. We study multiple possible choices for the uncertainty set. In the first case, the demand at each time is within a given confidence interval. In the second case, we similarly restrict the demand at each time to lie inside confidence intervals with given centers and radii, but a certain bound (a "budget") is put on the aggregate deviation of the demand from the interval centres throughout the time horizon, i.e. the uncertainty is "budgeted" throughout the time horizon. In both cases, we present linear-time algorithms for finding the optimal solution, and prove their validity. To the best of the authors' knowledge, the algorithms we present are the first to give a tractable solution to the robust shortest path problem when the edge costs are positively correlated (see Section \ref{sec.Background_RSPP} for more details).

The paper is structured as follows. Section \ref{sec.Background} presents some background about ED, the shortest-path problem and robust optimization, as well as a literature review on the robust shortest path problem. Section \ref{sec.RobustDispatch} considers the robust ED problem as a worst-case shortest-path problem, including multiple possible cases for the uncertainty set, and presents efficient algorithms for solving the robust ED problem in these cases. Section \ref{sec.CaseStudy} portrays a case study demonstrating the algorithms. 

\paragraph*{Preliminaries}
We let $\N = \{1,2,\ldots\}$ be the set of all natural numbers. We use notions from graph theory \cite{Bondy1977}. A directed graph $\G = (\V,\E)$ consists of a finite set of vertices $\V$ and a set of edges $\E$ which are pairs of vertices of $\V$. An edge from $u\in \V$ to $v\in \V$ will be denoted as $u\to v$, where $u$ is the tail of the edge and $v$ is its head. A path from a vertex $u$ to a vertex $v$ is a sequence of edges $e_1,\cdots,e_\ell$ such that $u$ is $e_1$'s tail, $v$ is $e_\ell$'s head, and for any $i$, $e_i$'s head is $e_{i+1}$'s tail. A directed graph $\G$ is called a DAG (directed acyclic graph) if there are no paths which begin and end at the same vertex. For a node $v\in \V$, the in-degree $\deg(v)$ is the number of edges $e\in \E$ which have $v$ as a head. As each edge has exactly one head, we have that $\sum_{v\in \V} \deg(v) = |\E|$.
A weighted directed graph is a triplet $(\V,\E,w)$ where $(\V,\E)$ is a directed graph and $w:\E\to \mathbb{R}$ is called the weight function. The cost of a path is defined as the sum of the weights of its edges. The shortest path problem for a graph $\G$ is a combinatorial optimization problem in which the goal is to find the path with the smallest cost from a node $s$ to a node $q$. 

We consider some notions from convex analysis \cite{Rockafellar1970}. For a convex set $\W\subset \R^d$, we say $x\in \W$ is an \emph{extreme point} if for any $y,z\in \W$ and any $t\in(0,1)$, if $x = ty+(1-t)z$ then $x=y=z$. The collection of extreme points of $\W$ is denoted by $\ext(\W)$. If $f:\W \to \mathbb{R}$ is a convex function and $\W$ is bounded and closed, it is known that $\max_{x\in \W} f(x) = \max_{x\in \ext(\W)} f(x)$ \cite[Theorem 32.2]{Rockafellar1970}. For a norm $\|\cdot\|$ on $\R^N$, the norm ball of radius $r>0$ around $x_0\in \R^N$ is equal to $\{x\in \R^N: \|x-x_0\|\le r\}$. Moreover, a weighted $\ell^\infty$ norm on $\R^N$ is given by $\|x\| = \max_{i=1}^N \{w_i|x_i|\}$ where $w_1,\ldots,w_N>0$ are the associated weights. Similarly, a weighted $\ell^1$-norm is given by $\|x\| = \sum_{i=1}^N w_i|x_i|$. 
The Minkowski sum of two sets $A,B$ is given by $A+B = \{a+b: a\in A, b\in B\}$.

\section{\textcolor{black}{Turbine Models, ED, and Robust Optimization}} \label{sec.Background}
This section provides the required background material, including a model for the MGT, the ED and shortest-path problems, and some basic notions from robust optimization.
\subsection{Discrete State-Space Models for Turbines}
We consider a micro gas turbine (MGT) with a discrete state space, which is a generalization of \cite{Rist2017}. Due to the low mechanical and thermal inertia of the proposed class engine and with sufficiently high discretization resolution, transition time between engine states becomes negligible. Therefore, only MGT steady states are considered. We denote the state space of the MGT by $\X$, which is assumed to be a finite set. The state $x(t)$ of the turbine  evolves in discrete time. The dynamics can be modeled by two functions $f,c$ and a state-indexed set $\U(x)$, i.e., for each $x\in \X$, we denote the set of admissible control signals by $\U(x)$. The function $f(x,u)$ describes the allowable transitions between turbine states, and the function $c(x,u)$ describes the transition times between states. More precisely, we assume that the function $c(x,u) \in \N$ for all pairs $(x,u)$, and consider the equation governing the state evolution of the turbine:
\begin{align*}
x(t+c(x(t),u(t))\Delta t) = f(x(t),u(t)),
\end{align*}
where $u(t)$ is the control input at time $t$ and $\Delta t$ is the time increment. In other words, if the control input $u(t)$ is applied at the state $x(t)$, the next state will be $f(x(t),u(t))$ and it will take $c(x(t),u(t))\Delta t$ time to get there. As $c(x,u) \in \N$ for all pairs $(x,u)$, the state $x(t)$ evolves at time $0,\Delta t,2\Delta t,\ldots$.

For each state $x$ and control input $u$, we let $P_{\rm MGT}(x,u)$ be the power generation associated with the state-control pair $(x,u)$ for one time step $\Delta t$, and let $H_{\rm MGT}(x,u)$ be the heat generation associated with the same state-control pair for one time step $\Delta t$. For example, let $x\in \X$ be a state in which the turbine is switched off, and $u$ is a control input for which $f(x,u) = x$, i.e. the turbine is also switched off at the next time step, then $P_{\rm MGT}(x,u) = H_{\rm MGT}(x,u) = 0$. We emphasize that both the power and heat generation can also depend on $u$. Indeed, because $P_{\rm MGT}$ and $H_{\rm MGT}$ aggregate the generation between two times $t_0$ and $t_0+\Delta t$, the control $u(t_0)$ does not only determine the state of the turbine at time $t_0 + \Delta t$, but also the generated amount in the intermediate time. Indeed, the turbine generates power in continuous-time, even though our model is discrete-time.

\begin{rem}
The discretization methodology presented is general for any micro gas turbine, and does not inherently assume a specific physical model while solving the ED problems in Section \ref{sec.RobustDispatch}. 
More precisely, given any data set predicting the performance of the turbine analytically, numerically or empirically, we can achieve a discrete model by choosing a grid for each of the turbine parameters that define its state. Then, we compute the power output $P_{\rm MGT}(x,u)$, the heat output $H_{\rm MGT}(x,u)$, and the fuel flow consumed by the turbine for each possible transition between two states. The fuel flow consumption defines the cost of operating the turbine, and is described in Section \ref{sec.ED}.
Moreover, we can choose to impose certain generation or ramp-rate constraints by limiting either the allowable physical state, the allowable control actions, or pairs thereof before performing the discretization.
\end{rem}
\begin{exam}
Consider the MGT model in \cite{Rist2017}, consisting of a single stage centrifugal compressor, a can-type combustor, a single stage turbine, a recuperator and a separate heat recovery unit. There, the active states of the turbine are characterized by two parameters, $p$ and $h$. The variable $p$ is the speed of the engine, and can take values $\p_1,\cdots\p_{\rm s}$, while the variable $h$ is the position of a recuperator bypass valve, and can take values $\h_1,\cdots,\h_{\rm v}$, so $\X = \{(\p_i,\h_j) : i=1,\ldots,{\rm s},~j=1,\ldots,{\rm v}\}$. The allowable transitions change the speed of the engine, the recuperator bypass valve position, or both by one level. Changing the position of the valve or slowing down the engine takes one unit of time (i.e. $c(x,u) = 1$ in this case), while revving up the engine takes two units of time (so $c(x,u) = 2$ for this transition).
\end{exam}

\begin{figure} [!b] 
	\vspace{-15pt}
    \centering
    \includegraphics[width = 0.42\textwidth]{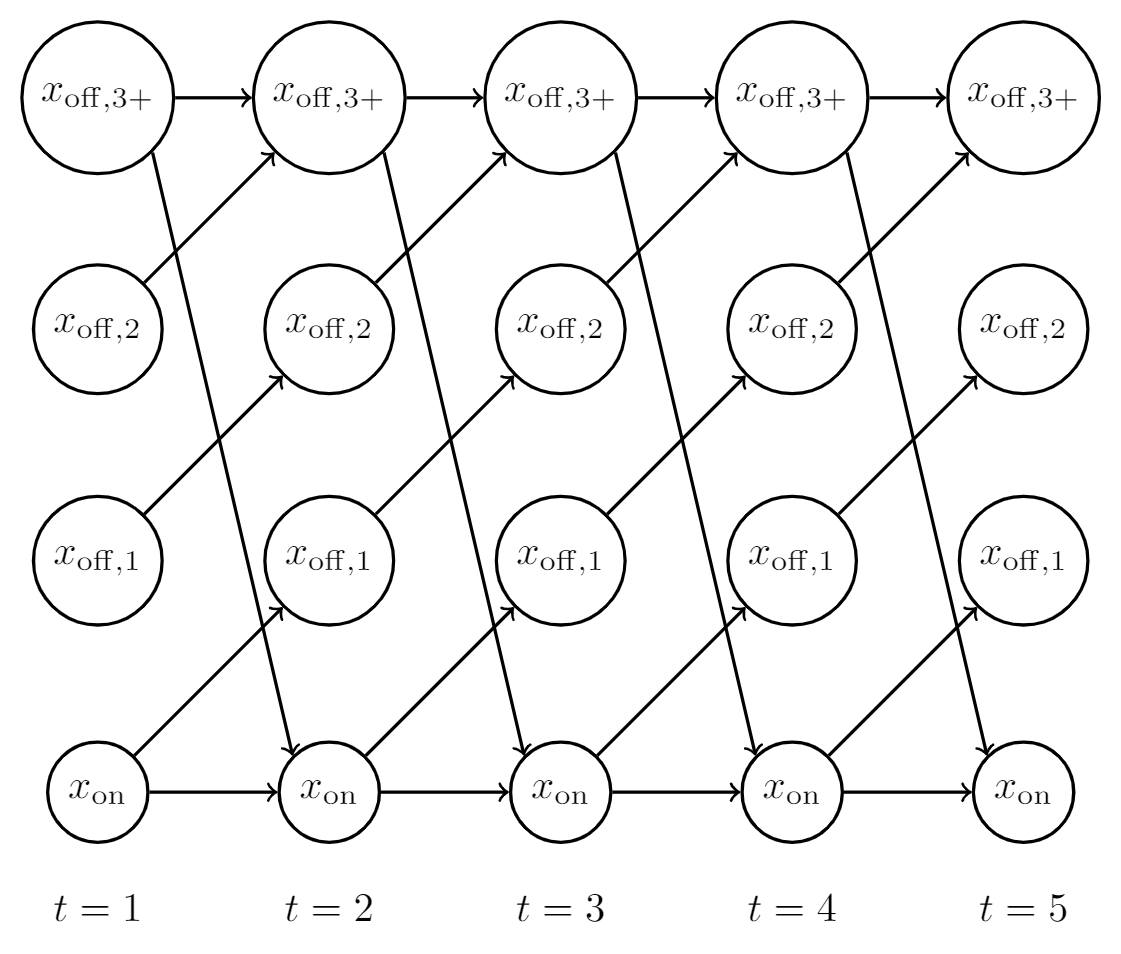}
	\vspace{-5pt}
    \caption{The state transition graph corresponding to the ED problem for the turbine in Example \ref{eq.StartUpMinimum} with time horizon $T=5$.}
    \label{fig.StartUpMinimum}
\end{figure}

\begin{exam} \label{eq.StartUpMinimum}
Consider a turbine that generates $\p$ units of power and $\h$ units of heat when active. When the turbine is on, it can be turned off at any time, within $15$ seconds. However, once it is turned off, it has a cool-down time of $45$ seconds (i.e., three time steps). We model the turbine using a discrete state-space representation with time instances $\Delta t =15_{\rm sec}$ apart and with $|\X| = 4$ possible states - one active state, $x_{\rm on}$, and three off states, $x_{\rm off,1}, x_{\rm off, 2}, x_{\rm off, 3+}$, which represent that the turbine has been inactive for 1,2, or at least 3 units of time, respectively. Here, $\U(x_{\rm on}) = \{\rm keep, shutdown\}$, $\U(x_{\rm off,3+}) = \{\rm keep, start\}$, and $\U(x_{\rm off,1}) = \U(x_{\rm off,2}) = \{\rm keep\}$. The control signal "$\rm keep$" moves $x_{\rm on}$ to itself, $x_{\rm off,1}$ to $x_{\rm off,2}$, $x_{\rm off,2}$ to $x_{\rm off,3+}$ and $x_{\rm off,3+}$ to itself. Moreover, the control signal ``$\rm shutdown$" moves $x_{\rm on}$ to $x_{\rm off,1}$, and the control signal ``$\rm start$" moves $x_{\rm off,3+}$ to $x_{\rm on}$. These transitions all take one time step, i.e. $c(x,u) = 1$ for all pairs $(x,u)$. The possible evolution of the state $x(t)$ of the turbine across 5 time steps can be seen in Fig. \ref{fig.StartUpMinimum}.
\end{exam}

\subsection{Economic Dispatch and the Shortest-Path Problem} \label{sec.ED}
The ED problem aims at scheduling the generation of the turbine throughout a time horizon $T$ as to minimize the cost while generating the required amount of heat and power. For each state-control pair $(x,u)$, we define $C_{\rm MGT}(x,u)$ as the total cost of operating the turbine for $c(x,u)$ units of time, starting at state $x$ and issuing the control input $u$. In other words, this is the cost of the transition defined by the state-control pair $(x,u)$. We let $(P(t),H(t))_{t=1}^T$ be the power and heat demand, which are known throughout the time horizon.

Besides the turbine, we can also draw power and heat from a utility. For a time $t$, we denote the power and heat purchased from the utility by $x_U^P(t)$ and $x_U^H(t)$ respectively. The cost of purchasing $x_U^P$ units of power and $x_U^H$ units of heat from the utility at time $t$ is denoted by $C_{U,t}^P(x_U^P),C_{U,t}^H(x_U^H)$ respectively. 
We assume the cost function $C_{U,t}^P$ is defined for $x_U^P$, which corresponds to the case in which the MGT tries to sell power to the utility. For example, a negative cost corresponds to selling power to the utility, and an infinite cost corresponds to inability to sell power. Moreover, we assume the function $C_{U,t}^H$ is defined for $x_U^H<0$ and satisfies $C_{U,t}^H(x_U^H) = 0$. In other words, we can exhaust excess generated heat with no extra cost.
We further assume that the functions $C_{U,t}^P,C_{U,t}^H$ are non-decreasing on the sets $\{x_U^P\in \R : C_{U,t}^P(x_U^P) < \infty\}$ and $\{x_U^H\in \R : C_{U,t}^H(x_U^H) < \infty\}$ respectively, i.e. that buying more power and heat from the utility will cost more, and that selling power to the utility (if possible) will earn more. The ED problem is defined as follows:

\vspace{-15pt}
\small
\begin{align} \label{eq.ED_Orig}
\min ~&~ \sum_{t=1}^{T} \left[C_{\rm MGT}(x(t),u(t)) + C_{U,t}^P(x_U^P(t)) + C_{U,t}^H(x_U^H(t))\right] \\ \nonumber
{\rm s.t.} ~&~ x(t+c(x(t),u(t))\Delta t) = f(x(t),u(t)), ~\forall t=1,\cdots,T\ \\\nonumber
~&~ P_{\rm MGT}(x(t),u(t)) + x_U^P(t) = P(t), ~\forall t\\\nonumber
~&~ H_{\rm MGT}(x(t),u(t)) + x_U^H(t) = H(t), ~\forall t \\\nonumber
~&~ x(t) \in \X,~~ u(t) \in \U(x(t)),~~x_U^P(t),x_U^H(t) \in \mathbb{R}, ~\forall t
\end{align}\normalsize

This problem is evidently a nonlinear mixed-integer problem. However, \cite{Rist2017} offers a quick solution method using a directed graph $\G = (\V,\E)$. The vertices are given by the pairs $(t,x)$ where $t\in\{1,\cdots,T\}$ and $x\in \mathcal{X}$. For a fixed time $t$, the nodes $\{(t,x)\}_{x\in \X}$ designate the state of the turbine at time $t$. As for the edges, $e=(t_1,x_1)\to(t_2,x_2) \in \E$ if there is some $u_1 \in \U(x_1)$ such that $f(x_1,u_1) = x_2$ and $t_2 = t_1 + c(x_1,u_1)$. The cost of said edge is defined as the total cost of the transition, given by the following expression: 
\begin{align} \label{eq.cost}
w_e = C_{\rm MGT}(x_1,u_1) + \sum_{t=t_1}^{t_2} &[C_{U,t}^P (P(t) - P_{\rm MGT}(x_1,u_1)) \\ +& C_{U,t}^H (H(t) - H_{\rm MGT}(x_1,u_1))]. \nonumber
\end{align} 

The edges and their cost represent the possible transitions for the turbine. For example, the corresponding graph for the turbine in Example \ref{eq.StartUpMinimum} with time horizon $T=5$ can be seen in Fig. \ref{fig.StartUpMinimum}. Thus, a possible trajectory $(x(t))_{t=1}^T$ of the state of the turbine corresponds to a \emph{path} in the graph\footnote{See the notations section for a precise definition of a path in a graph.}. If we define the cost of a path as the sum of the costs of the corresponding edges, we get a one-to-one correspondence between paths on the graph $\G$ and generation schedules of the turbine, in which the total cost of a schedule is identical to the cost of the corresponding path. Therefore, the ED problem can be restated as finding the cheapest path from some node $(1,x)$ to some other node $(T,y)$, where $x,y\in\X$ are the initial and final state of the turbine.

Suppose we add a node $s$ (called the source node) and a node $q$ (called the terminal node) to the graph, and add edges $s\to(1,x)$, $(T,x) \to q$ from all $x\in \X$ having zero weight. Any path from some node $(1,x)$ to some other node $(T,y)$, where $x,y\in \X$, uniquely defines a path from $s$ to $q$. Moreover, these paths share the same cost. Thus, the ED problem can be understood as finding the cheapest path from $s$ to $q$, known as the \emph{shortest path problem} \cite{Cormen2009}. If we denote the set of all paths from $s$ to $q$ in $\G$ by ${\rm PATH}_{\rm s \to q}(\G)$, we get the following optimization problem in the variable ${\rm Path}_{s\to q}$:
\begin{align} \label{eq.SPP}
\min_{{\rm Path_{s\to q}}} \left\{\sum_{e\in {\rm Path_{s\to q}}} w_e : \text{${\rm Path_{s\to q}}\in {\rm PATH}_{\rm s \to q}(\G)$}\right\}.
\end{align}
As $\G$ is a directed acyclic graph (DAG), standard dynamic programming methods solve this problem quickly, with computational complexity equal to $O(|\E|)$. Moreover, standard graph theory software provides implementation of said methods. Thus, the ED problem for an MGT can be solved quickly using off-the-shelf software.

However, this approach is inapplicable if the demands are unknown and the weights of the edges cannot be determined accurately. Usually an estimate on the demand throughout the horizon is known, so it is tempting to try and solve this problem with the estimate, disregarding the estimation error. However, the optimal solution to many complex real-life optimization problems can perform poorly when the parameters of the problems are changed by even a minuscule amount \cite{BenTal2009,Elsayed2016}. This motivates using tools from robust optimization, giving a bound on the worst-case behavior of a proposed solution.

\subsection{Robust Optimization}
Consider a minimization problem in the variable $x$, where both the cost function $F(x,\xi)$ and constraints $\phi(x,\xi)\le 0$ are affected by an uncertain variable $\xi \in \Xi$, where the inequalities are understood component-wise. In our case, the uncertain variables are the power and heat demands. In classical robust optimization, we choose a subset $\W \subseteq \Xi$ defining all possible values of the uncertainty we consider, coined the \emph{uncertainty set}, and define the worst-case optimization problem \cite{BenTal2009}:
\begin{align*}
\min_{x}~\max_{\xi\in \W} \left\{F(x,\xi)
 : \phi(x,\xi) \le 0,~\forall \xi \in \W\right\}.
\end{align*}
This optimization problem assures that the solution is feasible for any value of the uncertainty within the uncertainty set, and gives a bound on its cost.  However, checking that $\phi(x,\xi) \le 0$ for any $\xi \in \W$ is usually very hard or even impossible if $\W$ is infinite. Instead, we reformulate the constraint as $\sup_{\xi \in \W} \phi(x,\xi) \le 0$, which is easier to verify if the supremum can be computed analytically. For example, if $\phi$ is a bi-linear function and $\W$ is defined using finitely many linear inequalities, the constraint can be reformulated to be linear. A common choice for $\W$ is $\W = \{\xi:\ \|\xi\| \le \delta\}$ for some norm $\|\cdot\|$, for which the supremum can be computed using the dual norm, defined as $\|\eta\|_\star = \sup_{\|\xi\| \le 1} \xi^\top \eta$ \cite{BenTal2009}. Common choices for $\|\cdot\|$ are the $p$-norm for $p = 1,2,\infty$, for which the dual norm is $q$-norm with $q = \infty,2,1$ correspondingly. The parameter $\delta$ must be tuned accordingly to avoid over-conservatism as well as over-optimism. See \cite{Bertsimas2004} for more on the implications of choosing a specific uncertainty set.

\subsection{Robust Shortest-Path Problems} \label{sec.Background_RSPP}
The shortest path problem \eqref{eq.SPP} depends on two parameters - the graph $\G$, and the edge weights $\{w_e\}_{e\in \E}$. The robust shortest-path problem studies the case in which the edge weights are unknown, but are assumed to lie inside a set $\W \subseteq \R^{|\E|}$, coined as the ``uncertainty set". More specifically, the problem aims to minimize the worst-case cost:
\begin{align}
\min_{{\rm Path_{s\to q}}} \max_{w\in \W} \left\{\sum_{e\in {\rm Path_{s\to q}}} w_e : \text{${\rm Path_{s\to q}}\in {\rm PATH}_{\rm s \to q}(\G)$}\right\}.
\end{align}
The edge weights are said to be uncorrelated if there exist sets $\W_e \subseteq \R$ for all $e\in \E$ such that $\W = \{w\in \R^{|\E|} : w_e \in \W_e\}$, i.e. knowing the cost of some edge does not give any more information about the cost of other edges.
It is known that the robust shortest path problem for a general uncertainty set $\W$ is NP-hard \cite{Yu1998}. For that reason, several works in the literature proposed uncertainty sets $\W$ which lead to tractable problems.

In \cite{Yaman2001,Montemanni2004}, the authors assume edge costs are uncorrelated, and choose an uncertainty set consisting of confidence intervals for the cost without budgeting the uncertainty, i.e., the uncertainty set was taken as $\W = \{w\in \R^{|\E|} : {\rm lower}_e\le w_e \le {\rm upper}_e$. Hence, the achieved solution is also robust against the case in which all edges incur the maximum possible cost, rendering it vastly overconservative for real-world scenarios in many applications.
This issue is addressed in \cite{Bertsimas2003}, in which a budgeted uncertainty set is considered by bounding the amount of edges whose cost can be different than the nominal value. This method cannot be applied to the problem of ED, as the costs of the edges are demand-dependent, and in practice, the demand will be different from our estimate at any time step, even if by a small amount. 
More recent works consider either a more complex uncertainty budgeting mechanism \cite{Gabrel2013}, or a more sophisticated robustification method \cite{Zhang2018a}. However, the former can yield NP-hard problems, while the latter yields problems which take a long amount of time to solve in practice, even for small graphs with only hundreds of nodes \cite{Zhang2018a}. For ED, the problem in \cite{Rist2017} is converted to a shortest-path problem on a graph with roughly $250,000$ nodes, rendering the approach of \cite{Zhang2018a} as inapplicable.
Moreover, all of these methods assume that the edge costs are either uncorrelated or negatively correlated with each other (i.e., if the cost $w_{e_1}$ significantly deviates from its mean, then the cost $w_{e_2}$ is less likely to deviate from its mean).
However, in shortest-path problems inspired by economic dispatch, e.g. in \cite{Rist2017}, the costs of edges corresponding to the same time step are positively correlated, as both are determined by the demand at the corresponding time step, and a larger demand leads to a larger cost. These reasons motivate the derivation of the algorithms presented in Section \ref{sec.RobustDispatch}.

\section{Robust Economic Dispatch with Uncertain Demands} \label{sec.RobustDispatch}
Consider an ED problem of the form \eqref{eq.ED_Orig}, where the demand $\xi = (P(t),H(t))_{t=1}^T$ is assumed to be unknown. Assume further that the true demand profile is contained in a set $\W \subseteq \R^{2T}$.  Note that in the ED problem, the turbine variables $x(t),u(t)$ must be scheduled in advance, after which the true demand is revealed and the utility variables $x_U^P(t),x_U^H(t)$ are computed from the power- and heat-balance equations, $x_U^P(t) = P(t) - P_{\rm MGT}(x(t),u(t))$ and $x_U^H(t) = H(t) - H_{\rm MGT}(x(t),u(t))$. In other words, the turbine variables $x(t),u(t)$ are treated as initial decision variables, and $x_U^P(t),x_U^H(t)$ are therefore viewed as recourse variables. We use the graph-based interpretation of the problem. For every edge $e \in \E$, we let $w_{e}(\xi)$ be equal to \eqref{eq.cost}, where $\xi = (P(t),H(t))_{t=1}^T$.
\begin{align} \label{eq.RSPP}
\min_{{\rm Path_{s\to q}}} ~&~ \max_{\xi \in \W} \sum_{e\in {\rm Path_{s\to q}}} w_e(\xi) \tag{RSPP} \\ \nonumber
{\rm s.t.}~&~ \text{${\rm Path_{s\to q}} \in {\rm PATH}_{s\to q}(\G)$}.
\end{align}
The main focus of this section is to study the tractability of \eqref{eq.RSPP} as a consequence of the choice of $\W$. 
\subsection{Positively-Extreme Profiles and $\mathcal{L}_\infty$-based Uncertainty}
The tractability of \eqref{eq.RSPP} boils down to the following question - what demand profiles $\xi$ are the worst-case for a specific path in the graph $\G$? Intuitively, the higher the demand, the higher the generation cost. It is easy to see by \eqref{eq.cost} that if $P_1(t) \le P_2(t)$ and $H_1(t)\le H_2(t)$ for all $t$, then $w_e(\xi_1) \le w_e(\xi_2)$ for every $e\in \E$, where $\xi_i = (P_i(t),H_i(t))_{t=1}^T$ for $i=1,2$.
Thus, for any $\xi_1,\xi_2 \in \R^{2T}$, we have:
\begin{align}\label{eq.Monotonicity}
 (\xi_1)_k \le (\xi_2)_k,~ \forall k=1,\cdots,2T \implies w_e(\xi_1) \le w_e(\xi_2).
\end{align}
This suggests the following definition:
\begin{defn}
Let $\W \subseteq \R^{2T}$ be any set. We say that $\xi \in \W$ is \emph{positively extreme} if for all $\zeta \in \W$ there exists some $k$ such that $\xi_k > \zeta_k$. In other words, we cannot find a point in $\W$ whose entries are all bigger than $\xi$'s. The collection of all positively extreme points in $\W$ will be denoted as $\pe(\W)$.
\end{defn}
\begin{exam}
If $\W = \{\xi\in\R^{2T}: \max_i a_i|\xi_i| \le \mu\}$ then ${\rm pe}(\W)$ contains only the point $(\frac{\mu}{a_1},\cdots,\frac{\mu}{a_{2T}})$.
\end{exam}
\begin{exam} \label{exam.L1}
If $\W = \{\xi\in\R^{2T}: \sum_i a_i|\xi_i| \le \mu\}$, ${\rm pe}(\W)$ contains all points $\xi$ such that $\xi_i \ge 0$ and $\sum_i a_i\xi_i = \mu$. In particular, $\pe(\W)$ is infinite.
\end{exam}

\begin{thm} \label{thm.pe}
Let $\W$ be any bounded closed subset of $\R^{2T}$, and assume all functions $w_e$ satisfy \eqref{eq.Monotonicity}. The problem \eqref{eq.RSPP} for $\W$ is equivalent to the problem \eqref{eq.RSPP} for ${\rm pe}(\W)$, i.e., \small
\begin{align}
\min_{{\rm Path_{s\to q}}} \max_{\xi \in \W} \hspace{-5pt}\sum_{e\in {\rm Path_{s\to q}}}\hspace{-5pt} w_e(\xi) = \min_{{\rm Path_{s\to q}}} \max_{\xi \in {\rm pe}(\W)}\hspace{-5pt} \sum_{e\in {\rm Path_{s\to q}}}\hspace{-5pt} w_e(\xi).
\end{align}\normalsize
\end{thm}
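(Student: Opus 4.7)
The plan is to prove the stronger pointwise identity
\begin{equation*}
\max_{\xi \in \W} \sum_{e \in P} w_e(\xi) \;=\; \max_{\xi \in \pe(\W)} \sum_{e \in P} w_e(\xi)
\end{equation*}
for every fixed $s$-$q$ path $P$ in $\G$; the theorem then follows by taking the minimum over all paths on both sides. One direction ($\ge$) is immediate from the inclusion $\pe(\W) \subseteq \W$. For the reverse, write $F(\xi) := \sum_{e \in P} w_e(\xi)$; by the monotonicity property \eqref{eq.Monotonicity}, $F$ is non-decreasing with respect to the componentwise partial order on $\R^{2T}$. It is therefore enough to prove the following domination lemma: for every $\xi \in \W$ there exists $\xi^\ast \in \pe(\W)$ with $\xi^\ast \ge \xi$ componentwise. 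Granted this, monotonicity gives $F(\xi) \le F(\xi^\ast) \le \max_{\zeta \in \pe(\W)} F(\zeta)$, and taking the max over $\xi \in \W$ completes the argument.

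The domination lemma is obtained by a compactness-plus-linear-functional argument. Given $\xi \in \W$, let $S_\xi := \{\zeta \in \W : \zeta \ge \xi\}$; this set is non-empty (it contains $\xi$), closed as the intersection of $\W$ with the closed set $\{\zeta \in \R^{2T} : \zeta \ge \xi\}$, and bounded as a subset of $\W$, hence compact. The continuous linear functional $g(\zeta) := \sum_{k=1}^{2T} \zeta_k$ attains its maximum on $S_\xi$ at some $\xi^\ast$. I claim $\xi^\ast \in \pe(\W)$: if not, some $\zeta \in \W$ satisfies $\zeta \ge \xi^\ast$ with $\zeta \ne \xi^\ast$, whence $\zeta \in S_\xi$ and $g(\zeta) > g(\xi^\ast)$ (strict because at least one coordinate inequality is strict), contradicting the choice of $\xi^\ast$. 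Thus $\xi^\ast$ is a Pareto-maximal element of $\W$ dominating $\xi$, as required.

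I expect the main conceptual step to be the compactness-based extraction of the Pareto-maximal dominator $\xi^\ast$ above an arbitrary $\xi \in \W$; once that is in place, everything else is routine bookkeeping from monotonicity of $F$ and the set inclusion $\pe(\W) \subseteq \W$. A minor technical caveat concerns attainment of the outer maxima: the argument above holds verbatim with $\sup$ in place of $\max$, and the hypothesis that $\W$ is closed and bounded, together with upper semi-continuity of $F$ (which can be read off from the definition of $w_e$ in \eqref{eq.cost} under mild regularity of the utility cost functions $C_{U,t}^{P},C_{U,t}^{H}$), promotes the suprema to maxima on both sides.
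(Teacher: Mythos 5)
Your proof follows the same route as the paper's: reduce to the pointwise identity $\max_{\xi\in\W}\sum_{e\in P}w_e(\xi)=\max_{\xi\in\pe(\W)}\sum_{e\in P}w_e(\xi)$ for each fixed path, establish that every $\xi\in\W$ is dominated componentwise by some point of $\pe(\W)$, and conclude by monotonicity. The only difference is that you justify the existence of the dominating positively extreme point explicitly, by maximizing the coordinate-sum functional over the compact set $S_\xi=\{\zeta\in\W:\zeta\ge\xi\}$, whereas the paper simply asserts that this closed bounded set has a positively extreme point; your version is, if anything, slightly more complete on that step.
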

\begin{proof}
Take any path ${\rm Path_{s\to q}}$ from $s$ to $q$, and let $e_1,\cdots,e_\ell$ be its edges. We show that $\max_{\xi \in \W} \sum_{i=1}^\ell w_{e_i}(\xi) = \max_{\xi \in \pe(\W)} \sum_{i=1}^\ell w_{e_i}(\xi)$. Take some $\zeta \in \W$. We claim that there exists a point $\xi \in \pe(\W)$ such that $\zeta_i \le \xi_i$ for all $i$. Indeed, this is true because the set $\W \cap \{\xi: \zeta_i \ge \xi_i\}$ is also bounded and closed, hence it has a positively-extreme point, which must be in $\pe(\W)$ by definition. In particular, we conclude by \eqref{eq.Monotonicity} that
\begin{align*}
 \sum_{i=1}^\ell w_{e_i}(\zeta) \le  \sum_{i=1}^\ell w_{e_i}(\xi) \le \max_{\xi \in \pe(\W)} \sum_{i=1}^\ell w_{e_i}(\xi).
\end{align*}
Maximizing over $\zeta \in \W$ completes the proof.
\end{proof}
We now examine a corollary of Theorem \ref{thm.pe} that considers an estimate for the power and heat demand at a time $t$, denoted by  $P_0(t),H_0(t)$ respectively, and an estimation error that is bounded by variables $\Delta P(t),\Delta H(t)$ respectively.
\begin{cor} \label{cor.Linfty}
Suppose that the set $\W$ is given by:
\begin{align} \label{eq.Linfty}
\mathcal{W} = \left\{\left(P(t),H(t)\right)_{t=1}^T: {|P(t)-P_0(t)| \le \Delta P(t),\atop |H(t) - H_0(t)| \le \Delta H(t)}\right\},
\end{align}
The robust ED problem with uncertainty set $\W$ is equivalent to the ED problem with demand $P(t) = P_0(t) + \Delta P(t)$, $H(t) = H_0(t) + \Delta H(t)$. Thus, it  can be solved in $O(|\E|) = O(\max_{x,u} c(x,u) |\X| T)$ time.
\end{cor}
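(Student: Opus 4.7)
The plan is to reduce the robust problem to a deterministic one by invoking Theorem \ref{thm.pe} with the specific uncertainty set \eqref{eq.Linfty}. Since the edge-cost functions $w_e$ defined by \eqref{eq.cost} already satisfy the monotonicity property \eqref{eq.Monotonicity} (as emphasized immediately before the statement of Theorem \ref{thm.pe}), the theorem applies as soon as $\W$ is closed and bounded, which is immediate because the set in \eqref{eq.Linfty} is a Cartesian product of $2T$ compact intervals. What remains is to characterize $\pe(\W)$ for this particular box.

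The key observation is that $\pe(\W)$ is a singleton, namely $\pe(\W) = \{\xi^\star\}$ with $\xi^\star = (P_0(t) + \Delta P(t),\, H_0(t) + \Delta H(t))_{t=1}^T$. To see this, I would first note that because $\W$ is a product of intervals, every coordinate can be independently pushed to its upper endpoint, so $\xi^\star \in \W$ and $\zeta_k \le \xi^\star_k$ for every $\zeta \in \W$ and every coordinate $k$. Any $\xi \in \W$ with $\xi \neq \xi^\star$ then has $\xi_k \le \xi^\star_k$ for all $k$ with at least one strict inequality, so $\xi^\star$ weakly dominates $\xi$ coordinate-wise, preventing $\xi$ from being positively extreme. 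On the other hand, $\xi^\star$ itself is positively extreme because no element of $\W$ exceeds it in any coordinate.

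With $\pe(\W) = \{\xi^\star\}$, Theorem \ref{thm.pe} reduces the inner maximum in \eqref{eq.RSPP} to evaluation at the single profile $\xi^\star$, so the robust ED problem is identical to the deterministic ED problem \eqref{eq.ED_Orig} with demand $P(t) = P_0(t) + \Delta P(t)$, $H(t) = H_0(t) + \Delta H(t)$. The deterministic problem is the shortest-path problem \eqref{eq.SPP} on the DAG $\G$ of Section \ref{sec.ED} and is solvable in $O(|\E|)$ time by standard dynamic programming, while the edge-count bound $|\E| = O(\max_{x,u} c(x,u)\, |\X|\, T)$ follows from the construction of $\G$: the $O(|\X| T)$ vertices $(t,x)$ each emit a bounded number of edges whose targets $(t + c(x,u), f(x,u))$ lie at most $\max_{x,u} c(x,u)$ time steps ahead. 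I do not anticipate any serious obstacle: the result is essentially a one-line application of Theorem \ref{thm.pe} once the box $\W$ is unfolded, the only mildly subtle point being the observation that a Cartesian product of intervals has exactly one positively-extreme point.
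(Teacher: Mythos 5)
Your proposal is correct and follows the same route as the paper's proof: invoke Theorem \ref{thm.pe}, observe that $\pe(\W)$ for the box \eqref{eq.Linfty} is the single upper-corner profile $(P_0(t)+\Delta P(t),H_0(t)+\Delta H(t))_{t=1}^T$, and then solve the resulting deterministic shortest-path problem on the DAG in $O(|\E|)$ time. The only difference is that you spell out the singleton characterization and the edge-count bound in more detail than the paper, which simply asserts them.
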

\begin{proof}
It's enough to show that \eqref{eq.RSPP} for the set $\W$ is equivalent to the shortest path problem with weights $w_e(\xi)$ for $\xi = (P_0(t) + \Delta P(t),H_0(t)+\Delta H(t))_{t=1}^T$. This follows immediately from Theorem \ref{thm.pe} and the fact that by definition, $\pe(\W) =  (P_0(t) + \Delta P(t),H_0(t)+\Delta H(t))_{t=1}^T$. Solving the shortest path problem in a DAG takes $O(|\E|)$ time \cite{Cormen2009}.
\end{proof}

The corollary above shows that the robust ED problem can be solved in a tractable manner if $\W$ has the form \eqref{eq.Linfty}, as it is equivalent to a shortest-path problem. However, in \eqref{eq.Linfty}, the demand is merely assumed to be within given confidence intervals for each time step. This assumption might lead to mediocre results in practice - if the confidence intervals are taken too large, the solution may be over-conservative, and if they are taken too small, we do not account for unforeseen short demand spikes. A common way to deal with this problem is to use uncertainty sets which also specify the $1$-norm, i.e. they budget the uncertainty over all time steps. This will be the focus of the next subsection. 

Before moving forward, we want to return to Example \ref{exam.L1}. There, $\pe(\W)$ was infinite, meaning that the problem $\max_{\xi \in \pe(\W)} \sum w_{e}(\xi)$ is hard to solve, unless more assumptions are added. If we assume the functions $w_e$ are convex in $\xi$, the maximized function is also convex, so the maximum is attained at an extreme point of the set $\pe(\W)$ \cite[Theorem 32.2]{Rockafellar1970}. The convexity of the functions $w_e$ can be understood using the convexity of the functions $C_{U,t}^P,C_{U,t}^H:$
\begin{prop}
All of the functions $\{w_e\}_{e\in \E}$ are convex if and only if all of the functions $C_{U,t}^P,C_{U,t}^H$ are convex
\end{prop}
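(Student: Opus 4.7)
The plan is to prove both implications separately using the explicit formula \eqref{eq.cost} for $w_e$.

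For the ``if'' direction, I would observe that \eqref{eq.cost} expresses $w_e(\xi)$ as a constant $C_{\rm MGT}(x_1, u_1)$ plus a finite sum of terms of the form $C_{U,t}^P(P(t) - P_{\rm MGT}(x_1, u_1))$ and $C_{U,t}^H(H(t) - H_{\rm MGT}(x_1, u_1))$ for $t$ ranging between $t_1$ and $t_2$. Each such term is the composition of a convex univariate function (by assumption) with the affine map from $\R^{2T}$ to $\R$ that picks out the relevant coordinate of $\xi$ and subtracts a constant. Since convexity is preserved under precomposition with affine maps and under finite summation, $w_e$ is convex on $\R^{2T}$. This direction is essentially a direct application of standard convex calculus.

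For the ``only if'' direction, the key idea is that the restriction of a convex function to an affine line is convex. Fix any $t^{*} \in \{1,\ldots,T\}$. I would first pick an edge $e$ whose time range $[t_1, t_2]$ contains $t^{*}$: for $t^{*} < T$ any edge originating at some $(t^{*}, x)$ works, while for $t^{*} = T$ any edge terminating at $(T, x)$ works. Then I restrict $w_e$ to the affine line in $\R^{2T}$ obtained by freezing all coordinates of $\xi$ except $P(t^{*})$. By \eqref{eq.cost}, the restriction takes the form $P(t^{*}) \mapsto C_{U,t^{*}}^P(P(t^{*}) - P_{\rm MGT}(x_1, u_1)) + K$, where $K$ aggregates all now-constant terms. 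Convexity of $w_e$ forces convexity of this restriction, which, being an argument shift of $C_{U,t^{*}}^P$, implies convexity of $C_{U,t^{*}}^P$ itself. Repeating the same line-restriction with $H(t^{*})$ in place of $P(t^{*})$ gives convexity of $C_{U,t^{*}}^H$.

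The only subtle point, and my candidate for the main obstacle, is the existence of an edge whose time range covers each $t^{*}$. This follows from the graph construction of Section \ref{sec.ED} provided that $\U(x)$ is nonempty for every $x \in \X$ (so that the turbine can always issue some control action), a mild standing assumption implicit throughout the paper. With that in place, the ``only if'' argument reduces to the line-restriction trick, and the whole proof is very short.
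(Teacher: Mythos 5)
Your proposal is correct and follows essentially the same route as the paper: both rest on the explicit formula \eqref{eq.cost}, which exhibits $w_e$ as a constant plus a sum of argument-shifted copies of $C_{U,t}^P$ and $C_{U,t}^H$. The only difference is one of completeness: the paper simply states that ``the result now follows'' from the constancy of $C_{\rm MGT}, P_{\rm MGT}, H_{\rm MGT}$ with respect to $\xi$, which really only spells out the ``if'' direction, whereas you make the ``only if'' direction explicit via the line-restriction argument and correctly flag the one genuine hypothesis it needs --- that every time step $t^{*}$ is covered by the time range of at least one edge --- a point the paper leaves implicit.
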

\begin{proof}
We fix an edge $e$ from a node $(t_1,x_1)$ to a node $(t_2,x_2)$, where $x_1,x_2\in \X$ are states of the turbine. Because there exists an edge between $(t_1,x_1)$ and $(t_2,x_2)$, there exists a control action $u_1 \in \U(x_1)$ such that $f(x_1,u_1) = x_2$ and $t_2 = t_1 + c(x_1,u_1)\Delta t$. Recall that $w_e$ was defined as a function of $\xi = (P(t),H(t))_{t=1}^T$ using the following expression:
\begin{align*}
    w_e(\xi) = C_{\rm MGT}(x_1,u_1) + \sum_{t=t_1}^{t_2}[&C_{U,t}^P (P(t) - P_{\rm MGT}(x_1,u_1)) \\+& C_{U,t}^H (H(t) - H_{\rm MGT}(x_1,u_1))]
\end{align*}
The result now follows from the fact that $C_{\rm MGT}(x_1,u_1), P_{\rm MGT}(x_1,u_1)$ and $H_{\rm MGT}(x_1,u_1)$ are all constant with respect to $\xi$.
\end{proof}

For that reason, we make the following assumption:
\begin{assump} \label{assump.Convexity}
For every time $t$, the utility cost functions $C_{U,t}^P,C_{U,t}^H$ are convex. Equivalently, the cost functions $w_e$ are convex.
\end{assump}
\begin{rem}
The convexity of $C_{U,t}^P$ can be easily deduced for many cases. For example, the cost function implemented in European electricity markets is a linear function, in which the per-unit price is achieved by an optimization problem aggregating all the demands and generations in the network \cite{Martin2014}. In other cases, service operators explicitly convexify this cost function \cite{Schiro2016}. 
Alternatively, utility operators put a fixed per-unit cost, as well as fixed costs and demand charges which only go into effect if the demand is positive \cite{Beaudin2015}. In this case, if we cannot sell power back to the utility, then $C_{U,t}^P(x_U^P) = A_tx_U^P+B_t$ for some possibly time-dependent parameters $A_t,B_t$ and $x_U^P \ge 0$, and $C_{U,t}^P(x_U^P) =\infty$ if $x_U^P<0$. Thus, $C_{U,t}^P$ is convex. If we instead consider the case in which there is only a fixed per-unit cost and a fixed cost, then $C_{U,t}^P$ is affine and thus convex.

For heat, most consumers use a boiler to satisfy their heat demand, in which case one can model the heating cost with the price of natural gas. In that case, the cost function $C_{U,t}^H$ is given by $$C_{U,t}^H(x_U^H) = \begin{cases} Ax_U^H, & x_U^H \ge 0 \\ 0, & x_U^H \le 0 \end{cases},$$ as excess heat can be exhausted with no extra cost. In particular, $C_{U,t}^H$ is convex. See \cite{Rist2017} for more details.

In any case, if either $C_{U,t}^P$ or $,C_{U,t}^H$ is not convex, and Assumption \ref{assump.Convexity} is needed, we approximate them by convex functions. We thus yield a suboptimal solution for \eqref{eq.RSPP}, whose quality depends on the approximation error of $C_{U,t}^P,C_{U,t}^H$.
\end{rem}


Under Assumption \ref{assump.Convexity}, we can prove an analogue of Theorem \ref{thm.pe} which relies on the notion of extreme points:
\begin{cor} \label{cor.ExtPe}
Let $\W \subseteq \mathbb{R}^2$ be bounded and closed, and assume $w_e$ satisfies \eqref{eq.Monotonicity} and Assumption \ref{assump.Convexity}. The problem \eqref{eq.RSPP} for $\W$ is equivalent to the problem \eqref{eq.RSPP} for $\ext(\pe(\W))$.
\end{cor}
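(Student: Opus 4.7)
The plan is to combine Theorem~\ref{thm.pe} with Assumption~\ref{assump.Convexity} through a convex-hull detour around $\pe(\W)$. Fix a path ${\rm Path}_{s \to q}$ with edges $e_1, \ldots, e_\ell$ and put $f(\xi) = \sum_{i=1}^{\ell} w_{e_i}(\xi)$; this $f$ is convex in $\xi$ by Assumption~\ref{assump.Convexity}. Theorem~\ref{thm.pe} already reduces the inner max over $\W$ to one over $\pe(\W)$, so it suffices to establish
\begin{align*}
\max_{\xi \in \pe(\W)} f(\xi) \;=\; \max_{\xi \in \ext(\pe(\W))} f(\xi).
\end{align*}

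The first step is to pass to $K = \mathrm{conv}(\pe(\W))$. Since $\pe(\W)$ is bounded (as a subset of $\W$) and closed (automatic for the convex uncertainty sets of practical interest here, and handled by passing to $\overline{\pe(\W)}$ in general), $K$ is a compact convex set in finite dimensions. I would then show $\max_{\xi \in K} f(\xi) = \max_{\xi \in \pe(\W)} f(\xi)$: the $\ge$ direction is immediate since $\pe(\W) \subseteq K$, and for $\le$, any $\xi \in K$ admits a representation $\xi = \sum_j t_j \xi_j$ with $\xi_j \in \pe(\W)$, $t_j \ge 0$, $\sum_j t_j = 1$, so convexity of $f$ yields
\begin{align*}
f(\xi) \;\le\; \sum_j t_j f(\xi_j) \;\le\; \max_{\zeta \in \pe(\W)} f(\zeta).
\end{align*}

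Applying Rockafellar's Theorem~32.2 to the convex compact set $K$ and the convex function $f$ gives $\max_{K} f = \max_{\ext(K)} f$. To close the chain, the final step is the inclusion $\ext(K) \subseteq \ext(\pe(\W))$. For $\xi \in \ext(K)$, Minkowski's theorem forces $\xi \in \pe(\W)$ (else writing $\xi$ as a nontrivial convex combination of points in $\pe(\W)$ contradicts extremality in $K$), and if $\xi = t y + (1-t) z$ with $y, z \in \pe(\W) \subseteq K$ and $t \in (0,1)$, then extremality of $\xi$ in $K$ yields $\xi = y = z$, hence $\xi \in \ext(\pe(\W))$. Chaining the equalities then gives the desired identity, from which Corollary~\ref{cor.ExtPe} follows.

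The main obstacle is that $\pe(\W)$ is typically non-convex even when $\W$ is, so Rockafellar's Theorem~32.2 cannot be applied to $\pe(\W)$ directly. The convex-hull detour resolves this, but shifts the burden to the inclusion $\ext(K) \subseteq \ext(\pe(\W))$; this is the crux of the argument and relies essentially on the paper's general notion of extreme point, which is defined via nontrivial convex combinations within the ambient set rather than assuming ambient convexity. A minor side-issue is the closedness of $\pe(\W)$, which is standard for convex $\W$ (including the $\ell^1$-type uncertainty sets considered in Example~\ref{exam.L1} and Section~\ref{sec.RobustDispatch}) and can otherwise be absorbed by passing to $\overline{\pe(\W)}$, since the limiting points do not affect the extremal structure used above.
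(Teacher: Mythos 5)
Your proof is correct and follows the same route as the paper's: reduce the inner maximization from $\W$ to $\pe(\W)$ via Theorem~\ref{thm.pe}, then use convexity of $\sum_i w_{e_i}$ to pass to extreme points. The only difference is that the paper invokes \cite[Theorem 32.2]{Rockafellar1970} directly on the (possibly non-convex) set $\pe(\W)$, whereas you justify that step explicitly through $K=\mathrm{conv}(\pe(\W))$ and the inclusion $\ext(K) \subseteq \ext(\pe(\W))$ --- a worthwhile piece of added rigor, and you also correctly flag the closedness of $\pe(\W)$ that both arguments implicitly assume.
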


\begin{proof}
 Fix any path ${\rm Path_{s\to q}}$ from $s$ to $q$, and let $e_1,\cdots,e_\ell$ be its edges. We show that $\max_{\xi \in \W} \sum_{i=1}^\ell w_{e_i}(\xi) = \max_{\xi \in \ext(\pe(\W))} \sum_{i=1}^\ell w_{e_i}(\xi)$. 
First, by Theorem \ref{thm.pe}, we have that $\max_{\xi \in \W} \sum_{i=1}^\ell w_{e_i}(\xi) = \max_{\xi \in \pe(\W)} \sum_{i=1}^\ell w_{e_i}(\xi)$.
Second, we note that $\sum_{i=1}^\ell w_{e_i}(\xi)$ is a convex function in $\xi$, so by \cite[Theorem 32.2]{Rockafellar1970}, for any closed bounded set $\mathcal{A}$ we have that 
\begin{align*}
\max_{\xi \in \mathcal{A}} \sum_{i=1}^\ell w_{e_i}(\xi) = \max_{\xi \in \ext(\mathcal{A})} \sum_{i=1}^\ell w_{e_i}(\xi).
\end{align*}
Choosing $\mathcal{A} = \pe(\W)$ completes the proof.
\end{proof}

\begin{exam} \label{exam.L1_2}
If $\W = \{\xi\in\R^{2T}: \sum_i a_i|\xi_i| \le \mu\}$, then $\ext(\pe(\W)) = \{\xi^{(1)},\cdots,\xi^{(2T)}\}$, where $\xi^{(j)}_i = \frac{\mu}{a_i}\mathds{1}_{\{i = j\}}$.
\end{exam}

\subsection{Mixed $\mathcal{L}_1\setminus\mathcal{L}_\infty$ Uncertainty}
For this subsection, we assume Assumption \ref{assump.Convexity} holds. We want to consider an uncertainty set $\W$ including both unforeseen short demand spikes as well as a constant bias from the estimate. A natural choice here is:
\begin{align*}
\mathcal{W} = \left\{(P(t),H(t))_{t=1}^T: \substack{|P(t)-P_0(t)| \le \Delta P(t),\\\substack{|H(t) - H_0(t)| \le \Delta H(t)\\ \sum_{t=1}^T \left[\frac{|P(t)-P_0(t)|}{\Delta P(t)}+\frac{|H(t)-H_0(t)|}{\Delta H(t)}\right] \le \mu}}\right\}.
\end{align*}
However, it is possible to show that unless $\frac{\mu}{2T} \ll 1$ or $1-\frac{\mu}{2T} \ll 1$, $|\ext(\pe(\W))|$ is exponential in $T$. Thus, Corollary \ref{cor.ExtPe} will not yield a tractable optimization problem. Instead, we consider a different uncertainty set:
\begin{align} \label{eq.MixedNorm}
\mathcal{W} = \left\{(P(t),H(t))_{t=1}^T: \substack{P(t) = P_0(t) + \eta_1^P(t) + \eta_\infty^P(t),\\ H(t) = H_0(t) + \eta_1^H(t) + \eta_\infty^H(t),\\
\sum_{t=1}^T \left[|\delta_{P,t}\eta^P_1(t)| + |\delta_{H,t}\eta^H_1(t)|\right] \le \mu_1,\\
 |\eta^P_\infty(t)|\le \Delta P(t), |\eta^H_\infty(t)|\le \Delta H(t),~\forall t}\right\},
\end{align}
where $\mu_1,\Delta P(t),\Delta H(t),\delta_{P,t},\delta_{H,t} \ge 0$ are parameters. 
This uncertainty set will be called the \emph{``mixed $\mathcal{L}_1$/$\mathcal{L}_\infty$ uncertainty set."} Intuitively, it dissects the uncertainty in the demand into two factors - the first, $\eta^P_1(t),\eta^H_1(t)$, corresponds to large-but-short unforeseen demand spikes, and the second, $\eta^P_\infty(t),\eta^H_\infty(t)$, corresponds to a small-but-long bias from the estimated demand, $(P_0(t),H_0(t))_{t=1}^T$. If $\mu_1,\Delta P(t),\Delta H(t),\delta_{P,t},\delta_{H,t}$ are tuned correctly, the uncertainty set can model both without being too conservative. Similarly to Example \ref{exam.L1}, the set $\ext(\pe(\mathcal{U}))$ consists of $2T$ elements, $(\bar{P}(t)+\Delta P^{(i)}(t),\bar{H_0}(t))_{t=1}^T$ and $(\bar{P}_0(t),\bar{H}_0(t)+\Delta H^{(i)}(t))_{t=1}^T$ for $i=1,\cdots,T$ , where:
\begin{align}\label{eq.Aux}
&\bar{P}_0(t) = P_0(t)+\Delta P(t),~
\bar{H}_0(t) = H_0(t)+\Delta H(t).\\\nonumber
&\Delta P^{(i)}(t) = \frac{\mu_1}{\delta_{P,t}}\mathds{1}_{\{i=t\}},~
\Delta H^{(i)}(t) = \frac{\mu_1}{\delta_{H,t}}\mathds{1}_{\{i=t\}}.
\end{align}
The demands $\bar{P_0}(t)$ and $H_0(t)$ serve as a worst-case scenario if there are no demand spikes, similarly to Corollary \ref{cor.Linfty}. For each $i = 1,2,\cdots,T$, the terms $\Delta P^{(i)}, \Delta H^{(i)}$ correspond to the highest possible demand spike at time $i$. In particular, for any $i$, the sum of these terms serves as a possible worst-case scenario for the uncertainty set \eqref{eq.MixedNorm}.
Thus, the robust ED problem for $\W$ is reduced to \eqref{eq.RSPP} with $2T$ possible demand profiles - $2$ demand profiles for each time $i$, in which the power or heat demand spikes at time $i$, respectively. We want to use an augmented form of the shortest-path algorithm to solve this problem. In order to do so, we first reformulate it as a problem closer to the classical shortest-path problem.

For each edge $e = (t_1,x_1) \to (t_2,x_2) \in \E$, we consider the cost $w_e$ for all $2T$ possible scenarios. We first define $W_{bias}(e)$ as $w_e((\bar{P}_0(t),\bar{H}_0(t)))$, which is the cost on the edge $e$ corresponding the worst-case spikeless demand, stemming only from the long-but-small bias in demand, which must be paid for each of the extreme $2T$ scenarios, no matter when the spike occurs. We also let $W_{\rm spike}(e)$ be the highest possible additional cost stemming from unforeseen demand spikes, defined as 
\small
\begin{align}\label{eq.W_add}
\max_{t_1 \le k < t_2} \{&w_e((\bar{P}_0(t) + \Delta P^{(k)}(t),\bar{H}_0)(t)),\\&w_e((\bar{P}_0(t),\bar{H}_0(t) + \Delta H^{(k)}(t)))\} - W_{\rm bias}(e),\nonumber
\end{align}\normalsize
That is, $W_{\rm spike}(e)$ is defined as the maximum possible cost of the edge $e$ in any of the $2T$ possible scenarios, minus $W_{\rm bias}(e)$.
 Given a path ${\rm Path_{s\to q}}$ from $s$ to $q$, the cost function $\max_{\xi \in \W} \sum_{e\in {\rm Path_{s\to q}}} w_e(\xi)$ is the sum of $\sum_{e\in {\rm Path_{s\to q}}}W_{\rm bias}(e)$ and $\max_{e\in {\rm Path_{s\to q}}}W_{\rm spike}(e)$. Indeed, if the demand profile is equal to $(\bar{P}_0(t) + \Delta P^{(k)}(t),\bar{H}_0(t))_{t=1}^T$ or to $(\bar{P}_0(t),\bar{H}_0(t) + \Delta H^{(k)}(t))_{t=1}^T$, meaning there is a spike in demand at time $k$, then all edges corresponding to transitions outside time $k$ only have the ``bias" cost, while the single edge corresponding to a transition at time $k$ has a $W_{\rm bias} + W_{\rm spike}$. Moreover, given any edge $e = (t_1,x_1) \to (t_2,x_2) \in \E$, there is at least one $k$ for which this edge has an additional cost equal to $W_{\rm spike}$. Thus, the optimization problem becomes:
\begin{align} \label{eq.MixedSP}
\min ~&~ \sum_{e\in {\rm Path_{s\to q}}} W_{\rm bias}(e) + \max_{e\in {\rm Path_{s\to q}}} W_{\rm spike}(e) \\ \nonumber
{\rm s.t.}~&~ \text{${\rm Path_{s\to q}} \in {\rm PATH}_{s\to q}(\G)$}.
\end{align}

We would like to consider an algorithm for solving the problem \eqref{eq.RSPP} in this case. A key idea that will be used is to consider the graph $\G$ with a different set of weights. For each number $\alpha \in \mathbb{R}$, we define the $\alpha$-restricted graph $\G_\alpha$ as a weighted graph $(\V,\E,\omega_\alpha)$, where $\G = (\V,\E)$ and $\omega_\alpha(e) = \begin{cases} W_{\rm bias}(e) & W_{\rm spike}(e) \le \alpha \\ \infty & W_{\rm spike}(e) > \alpha \end{cases}.$
We present the following algorithm for solving the problem. First, we compute all parameters as in \eqref{eq.Aux}, and define $W_{\rm bias},W_{\rm spike}$ as above. Second, we define an array of thresholds called ${\rm Thresh}$. For each threshold $\alpha \in {\rm Thresh}$, we solve the classical shortest-path problem on the weighted graph $\mathcal{G}_\alpha$. We let $V_{\rm path}(\alpha)$ be this shortest path on $\mathcal{G}_\alpha$, and let $V_{\rm cost}(\alpha)$ as the total cost of this path in the problem \eqref{eq.MixedSP}. We then choose the return the path $V_{\rm path}(\alpha)$ for which $V_{\rm cost}(\alpha)$ is minimal over all thresholds $\alpha$. We show that choosing the set of thresholds as $\{W_{\rm spike}(e)\}_{e\in \E}$ guarantees that we achieve an optimal solution of \eqref{eq.MixedSP}.

\begin{algorithm}
\caption{Optimal Economic Dispatch for Mixed $\mathcal{L}_1$/$\mathcal{L}_\infty$ Uncertainty}
{\bf Input:} An uncertainty set $\mathcal{W}$ of the form \eqref{eq.MixedNorm}.\\
{\bf Output:} An optimal solution to the corresponding robust economic dispatch problem.
\label{alg.MixedL1L2}
\begin{algorithmic}[1] 
\State Define $\bar{P}_0(t),\bar{H}_0(t),\Delta P^{(i)}(t),\Delta H^{(i)}(t)$ as in \eqref{eq.Aux}.
\State Define four arrays $W_{\rm bias},W_{\rm spike},V_{\rm cost},V_{\rm path}$.
\For{each edge $e$ in the graph}
\State Define $W_{\rm bias}(e) = w_e((\bar{P}_0(t),\bar{H}_0(t))_{t=1}^T)$.
\State Define $W_{\rm spike}(e)$ as in \eqref{eq.W_add}.
\EndFor
\State \label{step.7} Define the array ${\rm Thresh} = W_{\rm spike}$.
\For {$\alpha \in {\rm Tresh}$}
\State Solve the shortest-path problem from $s$ to $q$ for $\G_\alpha$. Let ${\rm Path}_{s\to q}$ the optimal path. If there's a tie, favor the path with the lower maximal $W_{\rm spike}$.
\State Define $V_{\rm path}(\alpha) = {\rm Path}_{s\to q}$.
\State Define $V_{\rm cost}(\alpha) = \sum_{e\in {\rm Path}_{s\to q}} W_{\rm bias}(e) +  \max_{e\in {\rm Path}_{s\to q}}W_{\rm spike}(e)$, the cost of the path ${\rm Path}_{s\to q}$ for the problem \eqref{eq.MixedSP}.
\EndFor
\State \label{step.13} Find $\gamma = \arg\min_\beta V_{\rm cost}(\beta)$.
\State {\bf return:} $V_{\rm cost}(\gamma), V_{\rm path}(\gamma)$.
\end{algorithmic}
\end{algorithm}

\begin{thm} \label{thm.L1Linfty}
Algorithm \ref{alg.MixedL1L2} solves \eqref{eq.RSPP} for the uncertainty set $\W$ of the form \eqref{eq.MixedNorm}, with computational complexity $O(|\E|^2)$.
\end{thm}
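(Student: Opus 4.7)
The plan is to verify two claims: that Algorithm \ref{alg.MixedL1L2} returns an optimal solution of the combinatorial reformulation \eqref{eq.MixedSP}, and that it does so in $O(|\E|^2)$ time. The reduction from \eqref{eq.RSPP} under uncertainty \eqref{eq.MixedNorm} to \eqref{eq.MixedSP} has already been carried out in the paragraphs preceding the algorithm, via Corollary \ref{cor.ExtPe} and the explicit enumeration of $\ext(\pe(\W))$ as $2T$ single-spike profiles, so I would simply cite that reduction and treat \eqref{eq.MixedSP} as the target problem.

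For correctness I would compare $V_{\rm path}(\gamma)$ to an arbitrary optimal path ${\rm Path}^\star$ of \eqref{eq.MixedSP}. Setting $\alpha^\star = \max_{e \in {\rm Path}^\star} W_{\rm spike}(e)$, the crucial observation is that $\alpha^\star$ is attained at some edge of ${\rm Path}^\star$, hence $\alpha^\star \in \{W_{\rm spike}(e)\}_{e\in \E} = {\rm Thresh}$. When the loop reaches $\alpha = \alpha^\star$, every edge of ${\rm Path}^\star$ carries its (finite) bias weight in $\G_{\alpha^\star}$, so ${\rm Path}^\star$ is a feasible candidate and the DAG shortest-path routine returns some $V_{\rm path}(\alpha^\star)$ with
\begin{align*}
\sum_{e \in V_{\rm path}(\alpha^\star)}\!\! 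W_{\rm bias}(e) \;\le\!\!\! \sum_{e \in {\rm Path}^\star}\!\! W_{\rm bias}(e).
\end{align*}
Moreover, every edge of $V_{\rm path}(\alpha^\star)$ has finite weight in $\G_{\alpha^\star}$, whence $\max_{e \in V_{\rm path}(\alpha^\star)} W_{\rm spike}(e) \le \alpha^\star$. Adding the two, $V_{\rm cost}(\alpha^\star)$ is at most the \eqref{eq.MixedSP}-cost of ${\rm Path}^\star$, which is the optimal value. Conversely, $V_{\rm cost}(\beta)$ is by construction the true \eqref{eq.MixedSP}-cost of some feasible path, so $V_{\rm cost}(\beta)$ upper bounds the optimum for every $\beta$. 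The argmin in line \ref{step.13} therefore recovers the optimum exactly, and $V_{\rm path}(\gamma)$ is an optimal path.

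For the complexity, the preprocessing in lines 1--6 runs in $O(|\E|)$ time because each edge $e = (t_1,x_1)\to(t_2,x_2)$ needs only constant work: the maximum defining $W_{\rm spike}(e)$ in \eqref{eq.W_add} ranges over $c(x_1,u_1) \le \max_{x,u} c(x,u)$ indices, a bounded quantity for a fixed MGT. The array ${\rm Thresh}$ has $|\E|$ entries, and each iteration of the loop executes one topologically-ordered DAG shortest-path computation in $O(|\E|)$ time. Hence the loop performs $O(|\E|^2)$ work, which dominates the final $O(|\E|)$ argmin in line \ref{step.13}.

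The main obstacle I anticipate is justifying that it \emph{suffices} to test the $|\E|$ thresholds in $\{W_{\rm spike}(e)\}_{e\in \E}$ rather than a continuum of values of $\alpha$. This rests squarely on the observation that the maximum of $W_{\rm spike}$ along any optimal path equals $W_{\rm spike}$ on one of its own edges, so the piecewise-constant function $\alpha \mapsto (\text{optimum of }\eqref{eq.MixedSP}\text{ subject to } \max_e W_{\rm spike}(e) \le \alpha) + \alpha$ need only be sampled at edge values. Once this is in place, no further case analysis is required; in particular the tiebreaker in line 9 is merely cosmetic for the inequality $V_{\rm cost}(\alpha^\star) \le \text{optimum}$, since $V_{\rm path}(\alpha^\star)$ automatically inherits $\max_e W_{\rm spike}(e) \le \alpha^\star$ from feasibility in $\G_{\alpha^\star}$.
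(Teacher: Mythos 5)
Your proposal is correct and follows essentially the same route as the paper: threshold on the set $\{W_{\rm spike}(e)\}_{e\in\E}$, solve a shortest-path problem on each restricted DAG $\G_\alpha$, and observe that the optimum's own spike-maximum is one of the tested thresholds. Your correctness argument is in fact slightly cleaner than the paper's --- you bound $V_{\rm cost}(\alpha^\star)$ directly by the optimal value via the two inequalities, whereas the paper argues through the dichotomy of conditions \eqref{eq.cond1}--\eqref{eq.cond2} and invokes the tie-break rule to identify $V_{\rm path}(\alpha^\star)$ with ${\rm Path}^\star$; as you note, the tie-break is not needed to establish optimality of the returned cost and path.
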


\begin{proof}
Let ${\rm Path}^\star_{s\to q}$ be the optimal solution of \eqref{eq.RSPP}. By optimality, for any other path ${\rm Path}_{s\to q}$ at least one of the following holds:
\begin{align}
\sum_{e\in {\rm Path^{\star}}_{s\to q}} W_{\rm bias}(e) &\le \sum_{e\in {\rm Path}_{s\to q}} W_{\rm bias}(e), \label{eq.cond1} \\
\max_{e\in {\rm Path^{\star}}_{s\to q}} W_{\rm spike}(e) &\le \max_{e\in {\rm Path}_{s\to q}} W_{\rm spike}(e), \label{eq.cond2}
\end{align}
where if at least one holds with equality, then both inequalities hold. Consider the graph $G_\alpha$ for $\alpha = \max_{e\in {\rm Path^{\star}}} W_{\rm spike}(e)$. For this graph, if a path ${\rm Path}_{s\to q}$ satisfies the inequality \eqref{eq.cond2}, it does so with equality. Thus, it must satisfy \eqref{eq.cond1}, meaning that ${\rm Path^{\star}}$ is the shortest path from $s$ to $q$ in $\G_\alpha$. By the tie break rule and conditions \eqref{eq.cond1}, \eqref{eq.cond2}, we conclude that $V_{\rm path}(\alpha) = {\rm Path^{\star}}$. Moreover, for any path $V_{\rm path}(\beta)$, we have:
\begin{align*}
V_{\rm cost}(\beta) =  \sum_{e\in V_{\rm path}(\beta)} W_{\rm bias}(e) + \max_{e\in V_{\rm path}(\beta)} W_{\rm spike}(e)
\end{align*}
Thus, by optimality of ${\rm Path^{\star}}$, $\alpha = \arg\min_\beta V_{\rm cost}(\beta)$, and the returned path is ${\rm Path^{\star}}$. This proves the correctness of the algorithm.
As for the time complexity, the parts outside the for-loop on $\alpha$ take $O(|\E|)$ time. Inside the for-loop, we build the DAG $\G_\alpha$, which takes $O(|\E|)$ time, and solve the shortest-path problem on it, which also takes $O(|\E|)$ time. The for-loop has $O(|\E|)$ iterations, so get a time complexity of $O(|\E|^2)$.
\end{proof}

\begin{rem}
\textcolor{black}{
The complexity of graph-based algorithms is traditionally written in terms of the number of vertices $|\V|$ in the graph, or the number of edges $|\E|$ in the graph. The complexity estimate of Theorem \ref{thm.L1Linfty} follows this norm, writing the complexity as $O(|\E|^2)$. However, we would like to connect this complexity estimate to the turbine model.}

\textcolor{black}{To do so, for any $x\in \X$, we let $\rho(x) = |\U(x)|$ be the  number of possible control actions at $x$. We also let $\rho_\X = \frac{1}{|\X|} \sum_{x\in \X}\rho(x)$ be the average number of control actions at a state. If the time horizon $T$ is long enough, then $|\E| = O(T|\X|\rho_\X)$. For example, this happens if $T$ is at least twice as long as the longest transition, $\max_{x,u} c(x,u)$. In that case, the complexity estimate is  $O(\E^2) = O(T^2|\X|^2 \rho_\X^2)$. This estimate will also be helpful later, when we present more efficient algorithms with complexity $O(|\E|) = O(T|\X|\rho_\X)$.}
\end{rem}

\begin{rem} \label{rem.ReducedComplexity}
The complexity bound $O(|\E|^2)$ can sometimes be too high for real-world economic dispatch problems, in which the graph $\G$ can have more than a million edges \cite{Rist2017}. Instead, we note that if $\delta_{P,t},\delta_{H,t}$ defined in \eqref{eq.MixedNorm} do not change too often, the array ${\rm Thresh}$ contains many repetitions. If ${\rm Thresh}$ contains $L\le |\E|$ different elements, then the computational complexity of the algorithm is $O(|\E|L)$.
We denote the number of different values that $\delta_{P,t},\delta_{H,t}$ take for $t=1,\cdots T$ as $n_P,n_H$ respectively. It's easy to see that if $T \gg \max_{x,u} c(x,u)$, each possible transition appears about the same number of times, then $L$ scales linearly with $n_P + n_H$. Thus, we get that $L = O\left(\frac{|\E|}{T}(n_P+n_H)\right)$. Therefore, we get an algorithm whose time complexity $O(|\E|L) =  O\left(\frac{|\E|^2}{T}(n_P+n_H)\right)$ grows linearly with the time horizon $T$, as for a fixed state-space representation, $|\E| = O(T)$ holds.
\end{rem}

Algorithm \ref{alg.MixedL1L2}, together with Remark \ref{rem.ReducedComplexity}, give a linear-time algorithm if most $\delta_{P,t},\delta_{H,t}$ have the same value. If this is not the case, we can give a linear-time algorithm that achieves an approximation of the optimal solution. Namely, we prove:
\begin{lem} \label{lem.approximation}
Consider the problem \eqref{eq.RSPP} for the uncertainty set $\W$ of the form \eqref{eq.MixedNorm}, and let ${\rm Path}^\star_{s\to q}$ be the optimal solution, $\alpha = \max_{e\in {\rm Path^{\star}}} W_{\rm spike}(e)$, and let $\beta \ge \alpha$ be any number. Denote ${\rm Path}_{s\to q}^{\beta}$ as the shortest path from $s$ to $q$ in $\G_\beta$. If:
\begin{align*}
 V^\star &=   \sum_{e\in {\rm Path}^\star_{s\to q}} W_{\rm bias}(e) + \max_{e\in {\rm Path}^\star_{s\to q}} W_{\rm spike}(e)\\
 V^\beta &=   \sum_{e\in {\rm Path}^\beta_{s\to q}} W_{\rm bias}(e) + \max_{e\in {\rm Path}^\beta_{s\to q}} W_{\rm spike}(e),
\end{align*}
then $V^\star \le V^\beta \le \min\{V^\star + \beta-\alpha,\frac{\beta}{\alpha} V^\star\}$.
\end{lem}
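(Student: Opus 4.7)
The plan is to handle the lower bound and the two components of the upper bound separately, using the fact that $\beta \ge \alpha$ to compare ${\rm Path}^\star_{s\to q}$ against ${\rm Path}^\beta_{s\to q}$ on the auxiliary graph $\G_\beta$.

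First, the lower bound $V^\star \le V^\beta$ is immediate from the optimality of ${\rm Path}^\star_{s\to q}$ for \eqref{eq.MixedSP}: every path from $s$ to $q$ (including ${\rm Path}^\beta_{s\to q}$) has cost at least $V^\star$ in \eqref{eq.MixedSP}, since it is equivalent to \eqref{eq.RSPP} for this uncertainty set.

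For the additive bound $V^\beta \le V^\star + \beta - \alpha$, the key observation is that since $\beta \ge \alpha = \max_{e \in {\rm Path}^\star_{s\to q}} W_{\rm spike}(e)$, every edge of ${\rm Path}^\star_{s\to q}$ has finite weight in $\G_\beta$, so ${\rm Path}^\star_{s\to q}$ is a legal path in $\G_\beta$. Because ${\rm Path}^\beta_{s\to q}$ is a shortest path in $\G_\beta$ with weights $\omega_\beta = W_{\rm bias}$, we obtain $\sum_{e \in {\rm Path}^\beta_{s\to q}} W_{\rm bias}(e) \le \sum_{e \in {\rm Path}^\star_{s\to q}} W_{\rm bias}(e) = V^\star - \alpha$. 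Combining this with $\max_{e \in {\rm Path}^\beta_{s\to q}} W_{\rm spike}(e) \le \beta$ (enforced by the finiteness of $\omega_\beta$ on ${\rm Path}^\beta_{s\to q}$) yields $V^\beta \le (V^\star - \alpha) + \beta$.

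For the multiplicative bound $V^\beta \le \tfrac{\beta}{\alpha} V^\star$, I would rearrange the desired inequality as $(V^\star - \alpha + \beta) \le \tfrac{\beta}{\alpha} V^\star$, equivalently $(\beta - \alpha)\bigl(\tfrac{V^\star}{\alpha} - 1\bigr) \ge 0$. Since $\beta \ge \alpha > 0$, this reduces to $V^\star \ge \alpha$, which holds as soon as $\sum_{e \in {\rm Path}^\star_{s\to q}} W_{\rm bias}(e) \ge 0$.

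The main obstacle I anticipate is justifying $V^\star \ge \alpha$ cleanly. In the economic-dispatch setting the bias weights $W_{\rm bias}(e)$ are nonnegative under mild assumptions (e.g.\ no net selling revenue on the spikeless worst case), which is the regime of interest; I would either state this as a standing assumption on the cost structure, or observe that any constant additive shift to the cost functions $C_{U,t}^P, C_{U,t}^H$ can be used to ensure $W_{\rm bias} \ge 0$ without affecting the ordering of feasible paths. This handled, the multiplicative bound follows directly from the additive one.
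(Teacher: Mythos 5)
Your proposal is correct, and for the lower bound and the additive bound it coincides with the paper's argument: both rest on the observation that $\beta\ge\alpha$ makes ${\rm Path}^\star_{s\to q}$ feasible in $\G_\beta$, giving $\sum_{e\in{\rm Path}^\beta}W_{\rm bias}(e)\le\sum_{e\in{\rm Path}^\star}W_{\rm bias}(e)$, combined with $\max_{e\in{\rm Path}^\beta}W_{\rm spike}(e)\le\beta$. The only divergence is in the multiplicative bound. The paper proves it directly by scaling both inequalities by $\tau=\beta/\alpha\ge1$, i.e.\ it asserts $\sum_{e\in{\rm Path}^\beta}W_{\rm bias}(e)\le\tau\sum_{e\in{\rm Path}^\star}W_{\rm bias}(e)$ and $\max_{e\in{\rm Path}^\beta}W_{\rm spike}(e)\le\tau\max_{e\in{\rm Path}^\star}W_{\rm spike}(e)$ and sums; you instead deduce it algebraically from the additive bound via the equivalence with $V^\star\ge\alpha$. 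These are the same inequality seen from two angles, and -- importantly -- both hinge on the same hidden hypothesis: the paper's step $\sum_{e\in{\rm Path}^\star}W_{\rm bias}(e)\le\tau\sum_{e\in{\rm Path}^\star}W_{\rm bias}(e)$ silently requires $\sum_{e\in{\rm Path}^\star}W_{\rm bias}(e)\ge0$, which is exactly your condition $V^\star\ge\alpha$. Your version has the merit of surfacing this assumption explicitly (it can fail if selling power to the utility makes the spikeless worst-case cost of the optimal path negative), and of noting that it is harmless in the intended regime; the paper's version is marginally shorter but leaves the assumption implicit. You also correctly require $\alpha>0$ for the ratio $\beta/\alpha$ to be meaningful, another point the paper passes over. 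No gap in your argument beyond the assumption you yourself flagged and proposed how to discharge.
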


\begin{proof}
It suffices to show that the right side of the inequality holds.  
First, as $\beta \ge \alpha$, the path ${\rm Path}^\star_{s\to q}$ has a finite cost in the graph $\G_\beta$, equal to its cost in $\G$. Thus, we find that:
\begin{align} \label{eq.FirstIneq}
\sum_{e\in {\rm Path}^\beta_{s\to q}} W_{\rm bias}(e) \le \sum_{e\in {\rm Path}^\star_{s\to q}} W_{\rm bias}(e).
\end{align}
Moreover,
\begin{align} \label{eq.SecondIneq}
\max_{e\in {\rm Path}^\beta_{s\to q}} W_{\rm spike}(e) \le \beta = \alpha + \epsilon \le \max_{e\in {\rm Path}^\star_{s\to q}} W_{\rm spike}(e) + \epsilon.
\end{align}
Summing \eqref{eq.FirstIneq} and \eqref{eq.SecondIneq} gives $V^\beta \le V^\star + \epsilon$. Moreover, as $\tau \ge 1$, we have,
\begin{align} \label{eq.ThirdIneq}
\sum_{e\in {\rm Path}^\beta_{s\to q}} W_{\rm bias}(e) \le \tau\sum_{e\in {\rm Path}^\star_{s\to q}} W_{\rm bias}(e),
\end{align}
and
\begin{align} \label{eq.FourthIneq}
\max_{e\in {\rm Path}^\beta_{s\to q}} W_{\rm spike}(e) \le \beta = \tau\alpha \le \tau\max_{e\in {\rm Path}^\star_{s\to q}} W_{\rm spike}(e).
\end{align}
Summing \eqref{eq.ThirdIneq} and \eqref{eq.FourthIneq} gives $V^\beta \le (1+\mu)V^\star$.
\end{proof}

Lemma \ref{lem.approximation} can be used to prescribe linear-time algorithms approximating the optimal solution of \eqref{eq.RSPP} for the uncertainty set \eqref{eq.MixedNorm}:
\begin{thm} \label{thm.AdditiveApproximation}
Consider Algorithm \ref{alg.MixedL1L2} and take any $\epsilon > 0$. Suppose that we change step \ref{step.7} and define 
\begin{align*}
{\rm Thresh} = \bigg\{&\min_{e\in \E} W_{\rm spike}(e),\min_{e\in \E} W_{\rm spike}(e) + \epsilon,\\&\min_{e\in \E} W_{\rm spike}(e) + 2\epsilon, \cdots, \max_{e\in \E} W_{\rm spike}(e)\bigg\}.
\end{align*}
Let $V^{\star,\epsilon}$ be the value provided by this modified algorithm, and let $V^\star$ be the optimal value of \eqref{eq.RSPP} for the uncertainty set $\W$ of the form \eqref{eq.MixedNorm}. Then $V^\star \le V^{\star,\epsilon} \le V^\star + \epsilon$. Moreover, the computational complexity of the modified algorithm is $O\left(|\E|\cdot \left\lceil\frac{\max_{e\in \E} W_{\rm spike}(e) - \min_{e\in \E} W_{\rm spike}(e)}{\epsilon}\right\rceil\right)$.
\end{thm}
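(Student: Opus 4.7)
The plan is to reduce the analysis directly to Lemma \ref{lem.approximation} via a case analysis on where the optimal spike threshold $\alpha = \max_{e\in {\rm Path}^\star_{s\to q}} W_{\rm spike}(e)$ lands within the discretized grid. First I would establish the lower bound $V^\star \le V^{\star,\epsilon}$ by observing that for every $\beta \in {\rm Thresh}$, the path $V_{\rm path}(\beta)$ is a feasible element of ${\rm PATH}_{s\to q}(\G)$ and $V_{\rm cost}(\beta)$ equals the objective of the equivalent reformulation \eqref{eq.MixedSP} evaluated on this path. Hence the algorithm's output $V^{\star,\epsilon} = \min_{\beta \in {\rm Thresh}} V_{\rm cost}(\beta)$ is the cost of some feasible path, which cannot undercut the optimum $V^\star$ of \eqref{eq.RSPP}.

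For the upper bound, let ${\rm Path}^\star_{s\to q}$ denote an optimal path and $\alpha = \max_{e\in {\rm Path}^\star_{s\to q}} W_{\rm spike}(e)$. Since $\alpha = W_{\rm spike}(e)$ for some edge $e \in \E$, we have $\min_{e\in \E} W_{\rm spike}(e) \le \alpha \le \max_{e\in \E} W_{\rm spike}(e)$. The modified ${\rm Thresh}$ is an $\epsilon$-spaced grid covering precisely this interval and containing its right endpoint, so there exists $\beta \in {\rm Thresh}$ with $\alpha \le \beta \le \alpha + \epsilon$: if $\alpha$ lies in some full subinterval $[\min_e W_{\rm spike}(e)+j\epsilon, \min_e W_{\rm spike}(e)+(j+1)\epsilon]$, take $\beta$ to be the right grid point, and if $\alpha$ falls in the last (possibly shorter) subinterval, take $\beta = \max_{e\in \E} W_{\rm spike}(e)$. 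The additive bound of Lemma \ref{lem.approximation} applied at this $\beta$ gives $V^\beta \le V^\star + (\beta - \alpha) \le V^\star + \epsilon$, and since $V^{\star,\epsilon} \le V_{\rm cost}(\beta) = V^\beta$, the upper bound follows.

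For complexity, the cardinality of ${\rm Thresh}$ is $\lceil (\max_{e\in\E}W_{\rm spike}(e) - \min_{e\in \E}W_{\rm spike}(e))/\epsilon \rceil + 1$, and each iteration of the for-loop builds $\G_\beta$ in $O(|\E|)$ and runs a single DAG shortest-path computation, also in $O(|\E|)$ time. Multiplying yields the claimed bound, and the parts outside the loop (constructing $W_{\rm bias}, W_{\rm spike}$, and taking the argmin) are lower-order contributions.

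I do not expect a serious obstacle here: the whole argument hinges on the observation that the $\epsilon$-grid always admits a threshold $\beta \in [\alpha, \alpha + \epsilon]$, which in turn relies on the explicit inclusion of the right endpoint $\max_{e\in \E} W_{\rm spike}(e)$ in ${\rm Thresh}$. The only point requiring mild care is the last partial subinterval, which is handled exactly as above. Everything else is a direct invocation of Lemma \ref{lem.approximation} together with the feasibility-based lower bound.
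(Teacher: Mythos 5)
Your proposal is correct and follows essentially the same route as the paper's own proof: locate a grid point $\beta \in {\rm Thresh}$ with $\alpha \le \beta \le \alpha + \epsilon$, invoke the additive bound of Lemma \ref{lem.approximation}, combine with the minimization in step \ref{step.13} and the feasibility-based lower bound, and count $\lceil (\max_e W_{\rm spike}(e) - \min_e W_{\rm spike}(e))/\epsilon\rceil + 1$ shortest-path calls at $O(|\E|)$ each. If anything, your handling of the last (possibly shorter) subinterval is more careful than the paper's, whose proof contains a sign typo (``$\beta \le \alpha$ and $\epsilon \le \beta-\alpha$'' where $\alpha \le \beta \le \alpha+\epsilon$ is clearly intended).
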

\begin{proof}
Suppose that $\alpha$ is $\max_e W_{\rm spike}(e)$, where the maximum is taken over the optimal solution to \eqref{eq.RSPP}. By construction, there exists some $\beta \in {\rm Thresh}$ such that $\beta \le \alpha$ and $\epsilon \le \beta - \alpha$. By Lemma \ref{lem.approximation}, we conclude that $V^\star \le V_{\rm cost}(\beta) \le V^\star + \epsilon$. By step \ref{step.13}, we have that $V^{\star,\epsilon} \le V_{\rm cost}(\beta) \le V^\star + \epsilon$. The inequality $V^\star \le V^{\star,\epsilon}$ is clear, as $V^\star$ is the optimal cost over all possible trajectories. Thus $V^\star \le V^{\star,\epsilon} \le V^\star + \epsilon$. 
As for the time complexity, the same argument as in the proof of Theorem \ref{thm.L1Linfty} shows that the time complexity is $O(|\E|N)$, where $N$ is the number of points in ${\rm Thresh}$. It can easily be seen that $N = \left\lceil\frac{\max_{e\in \E} W_{\rm spike}(e) - \min_{e\in \E} W_{\rm spike}(e)}{\epsilon}\right\rceil + 1$, which gives the desired complexity bound. This completes the proof.
\end{proof}

Similarly, we prove:
\begin{thm} \label{thm.MultiApprox}
Consider Algorithm \ref{alg.MixedL1L2} and take any $\mu > 0$. Suppose that we change step \ref{step.7} and define 
\begin{align*}
{\rm Thresh} = \bigg\{&\min_{e\in \E} W_{\rm spike}(e),(1+\mu)\min_{e\in \E} W_{\rm spike}(e),\\&(1+\mu)^2\min_{e\in \E} W_{\rm spike}(e), \cdots, \max_{e\in \E} W_{\rm spike}(e)\bigg\}.
\end{align*}
Let $V^{\star,\mu}$ be the value provided by this modified algorithm, and let $V^\star$ be the optimal value of \eqref{eq.RSPP} for the uncertainty set $\W$ of the form \eqref{eq.MixedNorm}. Then $V^\star \le V^{\star,\mu} \le (1+\mu)V^\star$. Moreover, the computational complexity of the modified algorithm is $O\left(|\E|\cdot \left\lceil\frac{\log\max_{e\in \E} W_{\rm spike}(e) - \log\min_{e\in \E} W_{\rm spike}(e)}{\log(1+\mu)}\right\rceil\right)$.
\end{thm}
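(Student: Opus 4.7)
The plan is to mirror the proof of Theorem \ref{thm.AdditiveApproximation} but to exploit the multiplicative bound from Lemma \ref{lem.approximation} instead of the additive one. The key observation is that the geometric grid $\{(1+\mu)^k \min_{e\in \E} W_{\rm spike}(e)\}_k$ is designed so that for any target value $\alpha$ in $[\min_{e\in \E} W_{\rm spike}(e), \max_{e\in \E} W_{\rm spike}(e)]$, there is a threshold $\beta$ in $\mathrm{Thresh}$ with $\alpha \le \beta \le (1+\mu)\alpha$, which is exactly what is needed to make the factor $\tau = \beta/\alpha$ in Lemma \ref{lem.approximation} at most $1+\mu$.

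First I would let $\mathrm{Path}^\star_{s\to q}$ denote an optimal solution of \eqref{eq.RSPP} with uncertainty set \eqref{eq.MixedNorm}, set $\alpha = \max_{e \in \mathrm{Path}^\star_{s\to q}} W_{\rm spike}(e)$, and pick $\beta \in \mathrm{Thresh}$ to be the smallest element of $\mathrm{Thresh}$ with $\beta \ge \alpha$ (such an element exists since $\max_{e\in\E} W_{\rm spike}(e) \in \mathrm{Thresh}$). By the geometric construction of $\mathrm{Thresh}$, we have $\beta \le (1+\mu)\alpha$. Applying the multiplicative estimate of Lemma \ref{lem.approximation} with this choice of $\beta$ gives $V^\beta \le (\beta/\alpha) V^\star \le (1+\mu)V^\star$, and since $V_{\rm cost}(\beta) = V^\beta$ in Algorithm \ref{alg.MixedL1L2}, step \ref{step.13} ensures $V^{\star,\mu} \le V_{\rm cost}(\beta) \le (1+\mu)V^\star$. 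The reverse inequality $V^\star \le V^{\star,\mu}$ is immediate, because every path returned by the modified algorithm is a feasible path from $s$ to $q$ whose worst-case cost is at least the optimum.

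For the complexity bound, I would follow the same accounting as in Theorem \ref{thm.L1Linfty}: the preprocessing of $W_{\rm bias}$ and $W_{\rm spike}$ is $O(|\E|)$, and each iteration of the for-loop over $\alpha \in \mathrm{Thresh}$ constructs $\G_\alpha$ and runs shortest path on this DAG in $O(|\E|)$ time. It then remains to count the size of $\mathrm{Thresh}$: the grid stops at the first $(1+\mu)^N \min_{e\in\E} W_{\rm spike}(e) \ge \max_{e\in\E} W_{\rm spike}(e)$, so $|\mathrm{Thresh}| = \lceil (\log\max_{e\in\E}W_{\rm spike}(e) - \log\min_{e\in\E}W_{\rm spike}(e))/\log(1+\mu) \rceil + 1$. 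Multiplying by $O(|\E|)$ yields the stated complexity.

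The only mildly subtle point is the case $\alpha < \min_{e\in\E} W_{\rm spike}(e)$, which cannot happen because $\alpha$ is itself a value of $W_{\rm spike}$ on some edge; and the degenerate case $\min_{e\in\E}W_{\rm spike}(e)=0$, which would have to be handled by either shifting the grid by an infinitesimal amount or observing that if $\alpha=0$ then every $\beta \in \mathrm{Thresh}$ already yields $V^\beta = V^\star$. Aside from this bookkeeping, the argument is essentially the same as the additive case, with the multiplicative bound of Lemma \ref{lem.approximation} doing all of the real work.
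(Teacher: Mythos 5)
Your proposal is correct and follows essentially the same route as the paper's own proof: identify $\alpha$ as the maximal $W_{\rm spike}$ on the optimal path, use the geometric spacing of ${\rm Thresh}$ to find $\beta$ with $1\le \beta/\alpha \le 1+\mu$, invoke the multiplicative bound of Lemma \ref{lem.approximation}, and count $|{\rm Thresh}|$ for the complexity. Your extra remarks on the degenerate cases ($\alpha$ below the grid minimum, or $\min_{e}W_{\rm spike}(e)=0$) are sensible bookkeeping that the paper omits, but they do not change the argument.
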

\begin{proof}
As before, let $\alpha=\max_e W_{\rm spike}(e)$, where thee maximum is taken over the optimal solution to \eqref{eq.RSPP}. By construction, there exists some $\beta \in {\rm Thresh}$ such that $1\le\frac{\beta}{\alpha} \le 1+\mu$. By Lemma \ref{lem.approximation}, we conclude that $V^\star \le V_{\rm cost}(\beta) \le (1+\mu)V^\star$. By step \ref{step.13}, we have that $V^{\star,\mu} \le V_{\rm cost}(\beta) \le (1+\mu)V^\star$. Together with $V^\star \le V^{\star,\mu}$, stemming from optimality, we conclude that $V^\star \le V^{\star,\mu} \le (1+\mu)V^\star$. 
As for the time complexity, the same argument as in the proof of Theorem \ref{thm.L1Linfty} shows that the time complexity is $O(|\E|N)$, where $N$ is the number of points in ${\rm Thresh}$. It can easily be seen that $N = \left\lceil\frac{\log\max_{e\in \E} W_{\rm spike}(e) - \log\min_{e\in \E} W_{\rm spike}(e)}{\log(1+\mu)}\right\rceil + 1$, which gives the desired complexity bound.
\end{proof}

\subsection{Discussion about Uncertainty Sets and Algorithms} \label{subsec.L1LinftyBalls}
In the previous sections, we presented two possible choices for the uncertainty set. The first one, \eqref{eq.Linfty}, can be understood as a norm ball of a weighed $\ell^\infty$-norm, centred around the point $(P_0(t),H_0(t))$. Indeed, the condition in \eqref{eq.Linfty} can be restated as $\frac{1}{\Delta P(t)}|P(t) - P_0(t)| \le 1$ and $\frac{1}{\Delta H(t)}|H(t) - H_0(t)| \le 1$, which define a norm ball of radius $1$ around the point $(P_0(t),H_0(t))$. The second choice of uncertainty set, \eqref{eq.MixedNorm}, can be similarly seen as a Minkowski sum of two norm balls centred around $(P_0(t),H_0(t))$, the first being a weighted $\ell^1$-norm, and the second being a weighted $\ell^\infty$-norm. 

The tunable parameters $\Delta P(t),\Delta H(t), \delta_{P,t},\delta_{H,t}, \mu_1$ are used to determine the size and exact shape of these uncertainty set. Figure \ref{fig.UncertaintySets} demonstrates the $\ell^1$-normed ball, $\ell^\infty$-normed ball, and their Minkowski sum in $\R^2$. It is seen in Figure \ref{fig.UncertaintySets} that with correct scaling, the Minkowski sum is a subset of an $\ell^\infty$-normed ball which does not contain points in which all entries are ``large" in absolute value, but still contains points in which a subset of the entries is large.

\begin{figure} [!ht] 
    \centering
    \includegraphics[width = 0.4\textwidth]{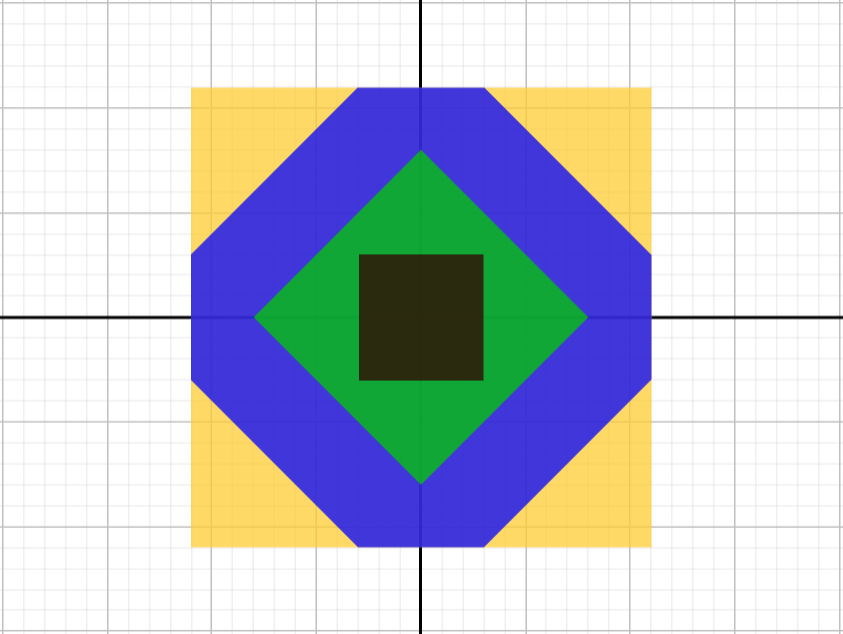}
    \caption{Normed ball in $\R^2$. The black set is an $\ell^\infty$-normed ball, and the green set is an $\ell^1$-normed ball. The yellow set is a larger $\ell^\infty$-normed ball, which is the analogue of \eqref{eq.Linfty}. The blue set is the Minkowski sum of the black and purple sets, which is the analogue of \eqref{eq.MixedNorm}. The blue set is a subset of the yellow set which does not include points in which both entries are large in absolute value.}
    \label{fig.UncertaintySets}
\end{figure}

We now return to the uncertainty sets \eqref{eq.Linfty} and \eqref{eq.MixedNorm}. With correct tuning, the set \eqref{eq.MixedNorm} is a subset of \eqref{eq.Linfty} which removes scenarios in which the realization of the demand uncertainty is ``large" in absolute value for all times, but includes scenarios in which the realization of the demand uncertainty is ``large" only for a subset of times. When solving \eqref{eq.RSPP}, smaller uncertainty sets allow us to reduce our conservatism, and hence improve our performance, assuming the uncertainty set contains the (unknown) true demand. This will become evident in the next section, which will test the prescribed algorithm in a case study.

To conclude, the mixed uncertainty set \eqref{eq.MixedNorm} considers more realistic scenarios, in which the demand does not simultaneously peak at all time steps. In practice, if the nominal demand $(P_0(t),H_0(t))_{t=1}^T$ is achieved from a load prediction algorithm, e.g. one appearing in \cite{Gross1987,Akay2007,Yu2017,Mirasgedis2006}, the parameters $\Delta P(t),\Delta H(t), \delta_{P,t},\delta_{H,t}, \mu_1$ are fitted by looking at the deviation of the algorithm. Some prediction algorithms report their expected standard deviation, while for others, we can look at the accuracy of the algorithm in the past. It might seem as if we return to the previous problem, where parameter inaccuracy jeopardized the performance of the algorithm, there are evidence that the robust algorithms are much less vulnerable to such parameter inaccuracies \cite{Bertsimas2004}.

Before moving on to the case studies illustrating the performance of the algorithms, we wish to elaborate a bit more on the complexity of the algorithms displayed above in terms of the discretization of the turbine:

\begin{rem}
Corollary \ref{cor.Linfty} and Theorems \ref{thm.L1Linfty}, \ref{thm.AdditiveApproximation} and \ref{thm.MultiApprox} prescribe complexity bounds on the algorithms presented in this section. The complexity bound is stated in terms of the number of edge $|\E|$ in the graph, which is the custom for graph-based algorithms in computer science \cite{Cormen2009}. However, we would like to understand it in terms of the discretization of the turbine.

The number of vertices $|\V|$ in the graph is equal to $T|\X|$, where $T$ is the horizon of the problem and $\X$ is the discrete state-space of the turbine. The relationship between the number of vertices and the number of edges is a bit more complex. Generally, we know that $|\V| \le |\E| \le \|\V\|^2$, as any vertex is connected to at least one other vertex, but no more than $\|\V\|$ other vertices. A more precise relationship can be achieved by considering the dynamics of the discretized model. Namely, the number of edges is smaller than $T\sum_{x\in \X} |\U(x)|$, and both are roughly equal if $T$ is much larger than most transition lengths $c(x,u)$. If we assume that the number of control actions in each state is bounded between $\U_{\rm min}$ and $\U_{\rm max}$, then $TU_{\rm min}|\X|\le |\E| \le TU_{\rm max}|\X|$. In that case, linear-time and quadratic-time algorithms in $|\E|$ are also linear-time and quadratic-time algorithms in $|\X|$, respectively.
\end{rem}
\section{Case Studies} \label{sec.CaseStudy}
\subsection{Modeling and Pricing}
We demonstrate the benefit of the presented algorithm in the economic dispatch of an MGT for CHP operation, whose cost functions are inspired by a discretized version of the Capstone C65 turbine \cite{C65Capstone}. The engine unit  consists of a single stage centrifugal compressor, a can-type combustor, a single stage turbine, a recuperator and a separate heat recovery unit. In order to accommodate the changing ratio between power and heat generation demand, the recuperator is equipped with a controllable valve which alters the amount of exhaust gasses bypassing the heat exchanger. The cycle's schematic is presented in Fig. \ref{fig.SchematicCycle}.

\begin{figure} [H]
    \centering
    \includegraphics[width = 0.45\textwidth]{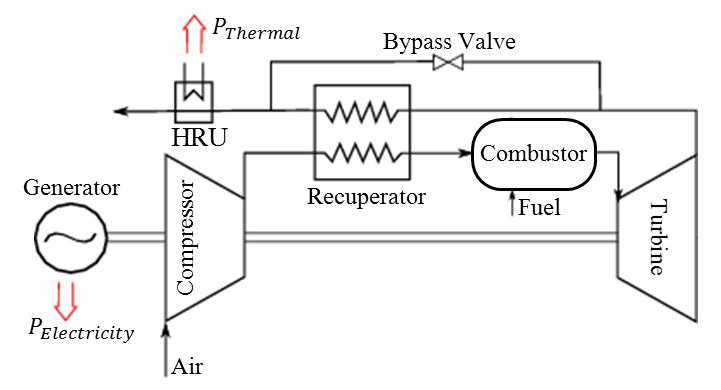}
    \caption{Engine schematic cycle.}
    \label{fig.SchematicCycle}
\end{figure}

\begin{figure*}[t]
    \centering
    \subfigure[Fuel mass flow.] {\scalebox{.39}{\includegraphics{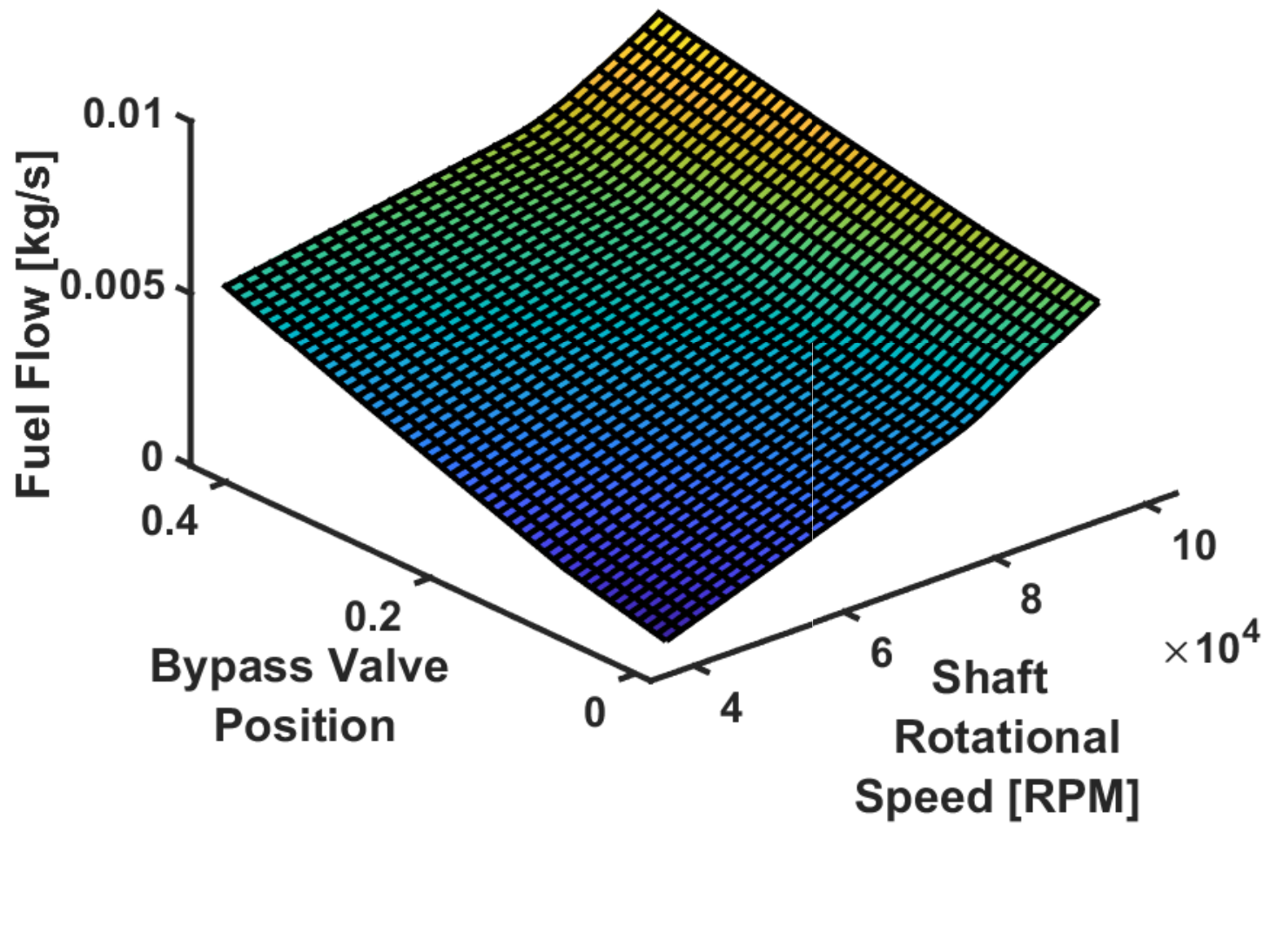}}} \hspace{5pt}
    \subfigure[Electricity output.] {\scalebox{.39}{\includegraphics{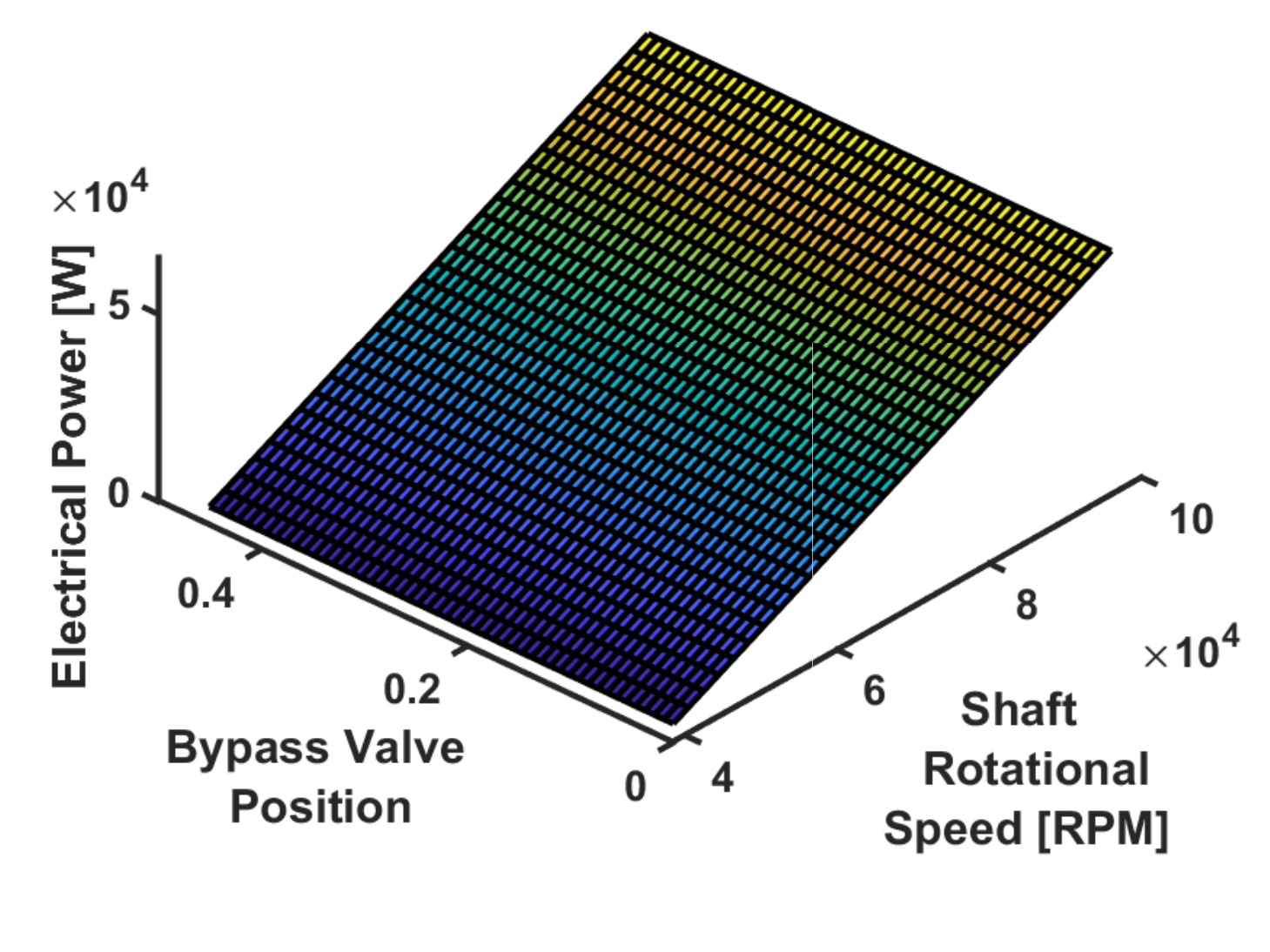}}}\hspace{5pt}
    \subfigure[Heat output.] {\scalebox{.39}{\includegraphics{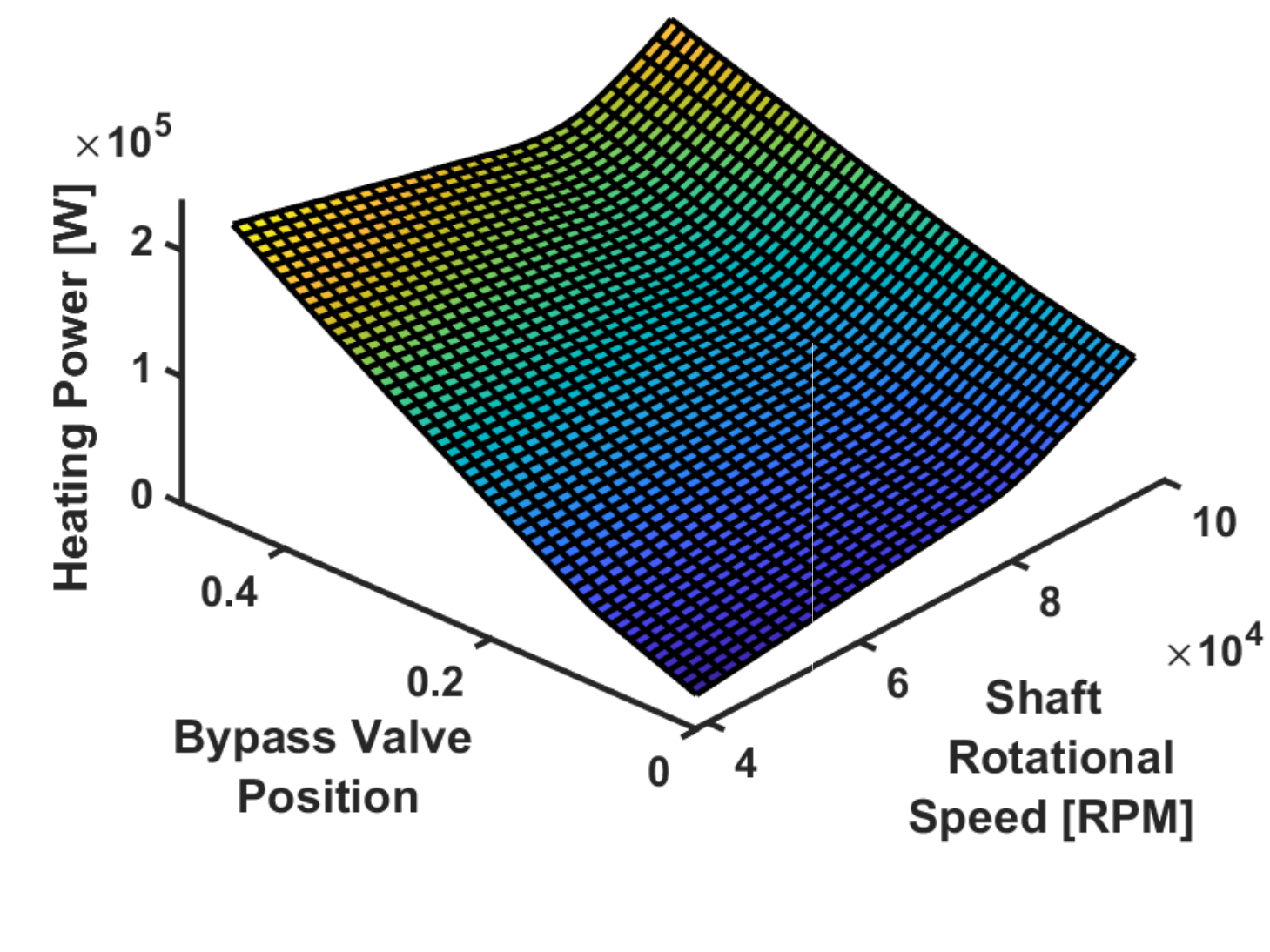}}}
    \caption{Solution grid over the states of the gas turbine model.}
    \label{fig.ElectricalPowerDomains}
\end{figure*}

In order to optimize the MGT cycle during its operation, two input parameters (shaft speed and recuperator bypass valve position) are selected and simulated to yield a number of solution states (electrical power and heat output that prescribes fuel mass flow). The discrete state space consists of $1501$ states ($1500$ active states and one `off' state), corresponding to $30$ different engine shaft speeds ($38.4-96$ krpm) and $50$ bypass valve positions ($0-45\%$). The thermodynamic performance of the MGT is characterized in electrical power and heat output domains ranging between $5-65$ kWel and $27-216$ kW respectively; see Fig. \ref{fig.ElectricalPowerDomains}(a)–(c). The lowest heat to power output ratio varies between $1.7-3.3$ for $0-45\%$ bypassing conditions. 

We adopt the edge weight scheme appearing in \cite{Rist2017} and determine the cost of an edge in the graph $\G$ as follows: for an edge between a state $x(t_1)$ and a state $x(t_2)$, the power generation is defined as the average power $\frac{x(t_1) + x(t_2)}{2}$ times the transition time $t_2-t_1$. The transition time is defined as $15$ seconds if we do not increase the engine rpm, and as $30$ seconds if we increase it by one step. The heat generation, fuel consumption and demand on the edge are defined similarly. The cost of the edge is now defined as the sum of the fuel cost, plus the cost of buying power and heat from the utility, while ensuring the power- and heat-balance equations are satisfied. In addition, there is a cost associated with the start-up and shut-down of the unit. 

The cost of a gas turbine engine is roughly $\$75,000$. Thus, assuming a low cycle fatigue life of 10,000 cycles \cite{Majumdar1975}, we estimate the cost of shutdown and startup as $\$3.75$ each. We assume the MGT is offline throughout start-up (lasting $6$ minutes) and shut-down (lasting $3$ minutes) sequences, so all power and heat must be purchased from the utility.

The dispatch problem is considered for a residential building of multiple apartments. The demand profile of each apartment stems from the data published by the U.S. Department of Energy for the entire year of 2004 \cite{EnergyBuildings1,EnergyBuildings2,EnergyBuildings3}. Within this database, we only consider the residential buildings internally specified as ``Residential High."\footnote{The full name of the file is {\tt\small USA\_NY\_New.York-Central.Park\\.725033\_TMY3\_HIGH.csv}} In order to decide how many apartments benefit from the same MGT, the demand is scaled such that $95$\% of the time, the turbine's electrical capacity is $80$\% of the consumer demand. This roughly corresponds to the needs of 9.3 apartments for our 65kWe turbine capacity.

For the cost of energy supplied from the utility, the price of electricity was determined according to data from PSEG Long Island New York \cite{PSEGLINY}, similar to \cite{Rist2017}. In particular, the price of electricity is different between peak hours\footnote{These are the hours between 10AM and 8PM.} and off-peak hours, as well as between winter days\footnote{These are days between October 1$^{\rm st}$ and May 31$^{\rm st}$.} and summer days.\footnote{These are days between June 1$^{\rm st}$ and September 30$^{\rm st}$.} See \cite[Table 2,3]{Rist2017} for residential buildings for more information. In addition to electricity, the energy source for heat/chill can also be obtained from the utility to achieve energy balance between the demand and supply. As most consumers use a boiler to satisfy their heat demand, we model the heating cost with the price of natural gas - as shown in Eq. 24 in Ref. \cite{Rist2017}. As for the cost of energy supplied by the MGT, the natural gas is the only consumable. The price of natural gas was taken to be $\$18.42$ per thousand cubic feet\footnote{Or equivalently, about $95$ cents per kilogram.}, which was the residential price of natural gas in August 2020 \cite{PriceOfGas}. If the MGT produces excess electricity beyond that of the local demand, the output is sold to the utility at the same tariff rate as the retail cost.

\textcolor{black}{In our previous work \cite{Rist2017}, it was shown that the solution to the ED problem under deterministic and  known demand profiles leads to four fundamental modes of  operation for the MGT:
%
electricity, heat, maintenance-cost and profit driven. Electricity and heat driven modes are dependent on the customer demand profiles in which peak electricity and peak heat requests vary through yearly seasons and day hours. Maintenance-cost driven behaviour is geared towards minimizing losses associated with low cycle fatigue cost for each shutdown and startup, along with fuel burned during these sequences. This operational state is manifested by running the MGT at a minimal electricity and heat production level (typically during the off-peak tariff period) despite the fulfillment of the demand from the utility appears to be more economical. Finally, in the profit driven state, the MGT operates at an electricity production level significantly beyond the consumer demand, selling all the excess electricity to the grid.}

\subsection{Algorithms and Running Time}
We treat the year 2004 as real time for  assessing the performance of each algorithm. The dispatch problem is considered for a horizon of 24 hours, dividing it to $T=5760$ intervals, each 15 seconds long. Although we have the ground-truth demand for each day, it is not available when solving the economic dispatch problem in practice, since we can not predict the ``future" accurately. Instead, we must use an estimate of the demand. For a given day, we use the ground-truth demand data from the previous two weeks (which are indeed available in practice while solving the ED problem) to compute an estimate of the demand for that day. For each time of day $t$, we compute the sample mean $\mu_P(t),\mu_H(t)$ and the standard deviation $\sigma_P(t),\sigma_H(t)$ of both power and heat demand from the preceding two weeks.

The performance of three algorithms are compared. Firstly, the ED algorithm from \cite{Rist2017} that does not account for demand uncertainty is applied to the forecasted mean demand $(\mu_P(t),\mu_H(t))$ - we term this as the \emph{nominal} algorithm. Moreover, the two robust ED algorithms presented in this work are considered. Their uncertainty sets use the standard deviation $\sigma_P(t),\sigma_H(t)$ of the power and heat demands, in addition to the forecasted mean demand. 

The first uncertainty set we choose is of the form \eqref{eq.Linfty} with $P_0(t)=\mu_P(t),H_0(t)=\mu_H(t)$ and $\Delta P(t) = \alpha_{\LL_\infty} \sigma_P(T), \Delta H(t) = \alpha_{\LL_\infty} \sigma_H(t)$ for some parameter $\alpha_{\LL_\infty}>0$. The tuning parameter $\alpha_{\LL_\infty}$ represents the trade-off between conservatism and accuracy. If $\alpha_{\LL_\infty}$ grows larger, the probability that the ground-truth demand is inside the uncertainty set becomes bigger. However, this also implies the algorithm considers greater likelihood for outlier events associated with worst-case demand profiles; hence the solution becomes more conservative. In this study, $\alpha_{\LL_\infty}$ is selected to be 0.13.
The second uncertainty set we choose is of the form \eqref{eq.MixedNorm} where $P_0(t)=\mu_P(t),H_0(t)=\mu_H(t)$, $\Delta P(t) = \alpha_{\rm Mixed,1} \sigma_P(T), \Delta H(t) = \alpha_{\rm Mixed,1} \sigma_H(t)$, $\delta_{P,t} = \frac{1}{\sigma_P(t)}$, $\delta_{H,t} = \frac{1}{\sigma_H(t)}$, and $\mu_1 = \alpha_{\rm Mixed,2}$. The parameters $\alpha_{\rm Mixed,1},\alpha_{\rm Mixed,2}$ determine the size of the uncertainty set and are tunable. Here, we chose $\alpha_{\rm Mixed,1} = 0.03$ and $\alpha_{\rm Mixed,2} = 40$.

All the algorithms were computed on a Dell Latitude 7400 computer with an Intel Core i5-8365U processor. The nominal algorithm utilizes the shortest path formulation presented in \cite{Rist2017}. For the first robust ED algorithm choice, Corollary \ref{cor.Linfty} shows that a single application of the shortest path algorithm suffices as well. Both the nominal algorithm and the first robust algorithm use the same underlying combinatorial solution provided by the MATLAB internal shortest path solver, meaning they have nearly identical running times.  For a total of 1500 discretization level combinations, building the graph and finding the shortest path takes about 2 minutes. 
For the second robust ED algorithm choice, we must apply Algorithm  \ref{alg.MixedL1L2}. As the number of edges in the underlying graph is in the millions, the application of Algorithm \ref{alg.MixedL1L2} would require running the shortest path algorithm on roughly 10 million different graphs. We instead use the approximate version described in Theorem \ref{thm.AdditiveApproximation}, where $\epsilon$ is chosen such that exactly $N=30$ applications of the shortest path problems are performed. Even then, the runtine is significantly longer, about 7 minutes. However, it is  still considered fast enough to be applicable in real-world systems. (Note that these runtimes can vary for different choices of the turbine discretizations.)

In addition to these three cases that forecast the demand, in order to contrast the performance of the algorithms with the global optimum, the shortest-path algorithm is applied to the ground-truth demand, which cannot be used in practice, as this quantity is unknown at the time of scheduling. Since this case produces the best possible schedule, we present it as the "benchmark" in all solutions.

\subsection{Schedules and Associated Costs for Residential Buildings}

In order to demonstrate the performance of the algorithms, a few exemplary days were analyzed considering their known two week demand histories: one winter day (February 5$^{\rm th}$), one spring day 
(March 24$^{\rm th}$), one summer day (June 28$^{\rm th}$), and one autumn day (September 19$^{\rm th}$). We note that the winter electricity tariff is used in the winter and spring days, whereas the summer electricity tariff is used in the summer and autumn days. For each day, the two robust algorithms (with demand uncertainty) and the nominal algorithm (without demand uncertainty) are evaluated based on the forecasted demand, and compared to the benchmark schedule stemming from known ground truth demand. Figures \ref{fig.Case1}-\ref{fig.Case4} present the resulting MGT electricity and heat production schedules of the 3 algorithms and contrasts it with the benchmark case utilizing ground-truth demand, which is also charted separately. The blue bands around the power and heat schedules indicate the forecasted demands with the standard deviations utilized in the uncertainty sets of the robust algorithms.    In order to clarify the actual impact on the engine control parameters, spool speed and bypass valve position are separately charted for each algorithm. The schedules are computed for $T=5760$ intervals, each 15 seconds long.
 
In the winter day of February 5$^{\rm th}$, Fig. \ref{fig.Case1}, it can be seen that all four algorithms keep the MGT spool speed constant at its minimum operational value at almost all times, yielding the smallest amount of local electricity production, consistently below the power demand, which is predominantly met by purchase from the utility. However, the differences in schedules arise from changes in bypass valve schedule and the associated MGT heat production. \textcolor{black}{Thus, the solution seems to be heat demand driven}. The first robust algorithm gives the closest results to the benchmark case, followed by the second robust algorithm, and the worst case is the nominal algorithm absent of uncertainty.

Similarly, in the spring day of March 24$^{\rm th}$, Fig. \ref{fig.Case3}, all four algorithms keep the MGT spool speed constant at its minimum operational value
at almost all times, significantly below the power demand, which is met by purchase from the utility. The second robust algorithm produces the same schedule as the nominal algorithm, but the first robust algorithm and the benchmark induce different schedules by choosing different trajectories for the bypass valve. \textcolor{black}{This suggests that the solution is heat driven also in this scenario.}
The first robust algorithms slightly outperforms the others, although the schedules produced by all three algorithms have similar associated costs.

In the summer day of June 28$^{\rm th}$, Fig. \ref{fig.Case6}, the first robust algorithm and the nominal algorithm produce identical schedules, and the second robust algorithm produces almost the same schedule as the benchmark. All algorithms decide to start the day operational. In the morning hours of the day (0-10AM), the MGT spool speed is near its minimum level and the electricity production is below the local demand. Moreover, the bypass valve is set to 0\%,  \textcolor{black}{manifesting maintenance-cost driven behaviour}. At around 10AM, the MGT spool speed increases to about 78krpm in order to satisfy the local demand. However, the electricity output of the turbine is not maximized, as the MGT does not reach its highest rpm. This appears to be an electricity demand driven operation, which lasts until around 19:00. From 20:00 until midnight, the first robust and the nominal algorithms decide to turn the turbine off, where as the second robust algorithm decides to keep it operating close to the minimum capacity. In fact, the second robust algorithm achieves a nearly identical schedule to the benchmark, with only a small change around 8-9PM, and a difference in cost of less than $1$ cent.

In the fall day of September 19$^{\rm th}$, see in Fig. \ref{fig.Case4}, according to the benchmark case, the optimal schedule is to shut down the MGT for the entire day. \textcolor{black}{This corresponds to a maintenance-cost driven solution, as it implies that the possible savings that could be achieved by turning on the turbine are negated by the low cycle fatigue cost associated with each startup and shutdown.} Only the second robust algorithm manages to replicate this behaviour. Both the first robust and the nominal algorithms turn on the turbine, and achieve an identical schedule, with a difference in cost of $\$6.4$.

\begin{figure} [!ht] 
    \centering
    \includegraphics[width = 0.45\textwidth]{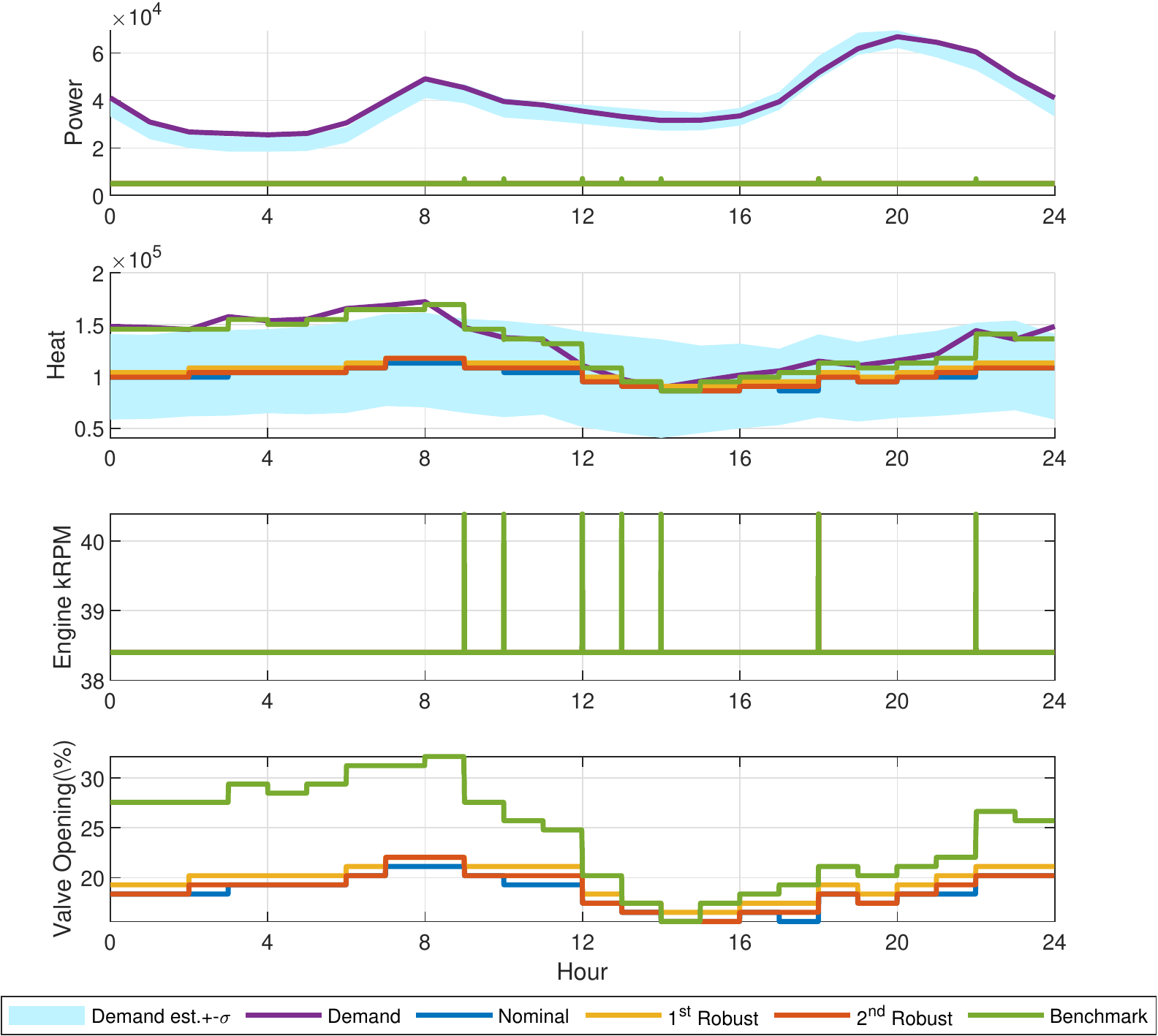}
    \caption{Schedules produced by the algorithms for February 5$^{\rm th}$ (winter).}
    \label{fig.Case1}
\end{figure}


\begin{figure} [!ht] 
    \centering
    \includegraphics[width = 0.45\textwidth]{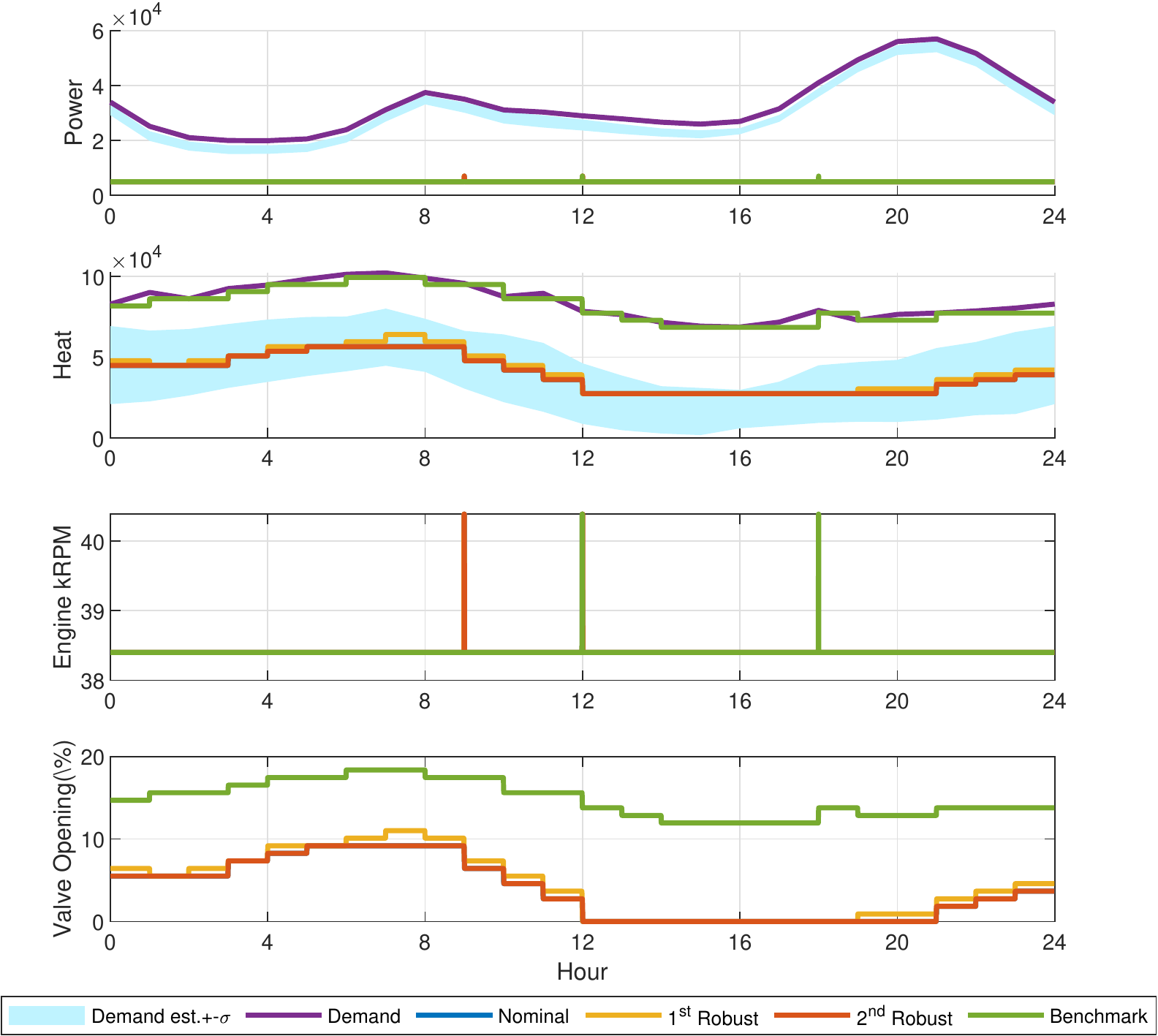}
    \caption{The schedules produced by the algorithms for the spring day, March 24$^{\rm th}$. The second robust algorithm produces the same schedule as the nominal algorithm.}
    \label{fig.Case3}
\end{figure}

\begin{figure} [!ht] 
    \centering
    \includegraphics[width = 0.45\textwidth]{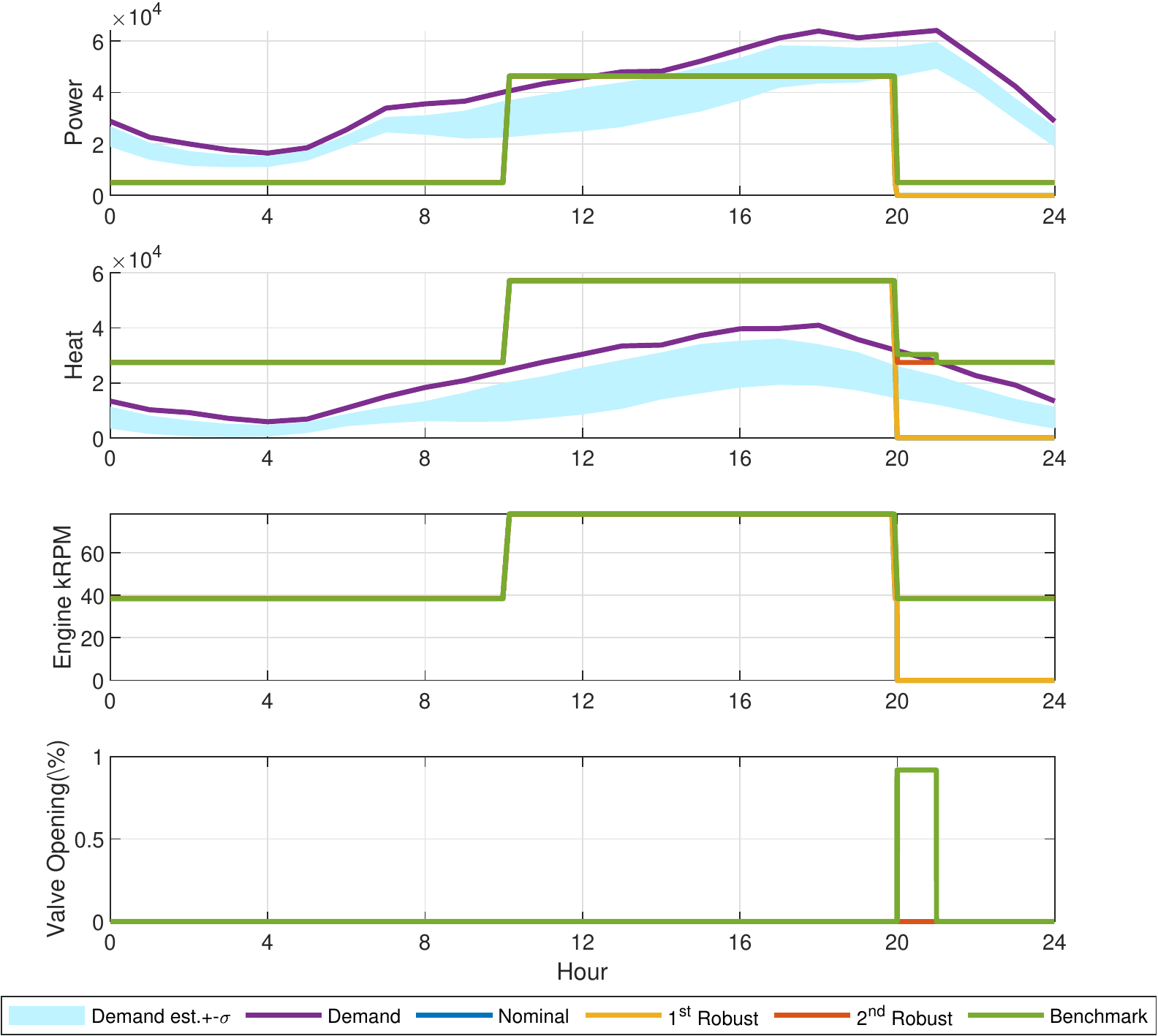}
    \caption{The schedules produced by the algorithms for the summer day, June 28$^{\rm th}$. The first robust algorithm produces the same schedule as the nominal algorithm, and the second robust algorithm produces a similar schedule to the benchmark.}
    \label{fig.Case6}
\end{figure}


\begin{figure} [!ht] 
    \centering
    \includegraphics[width = 0.45\textwidth]{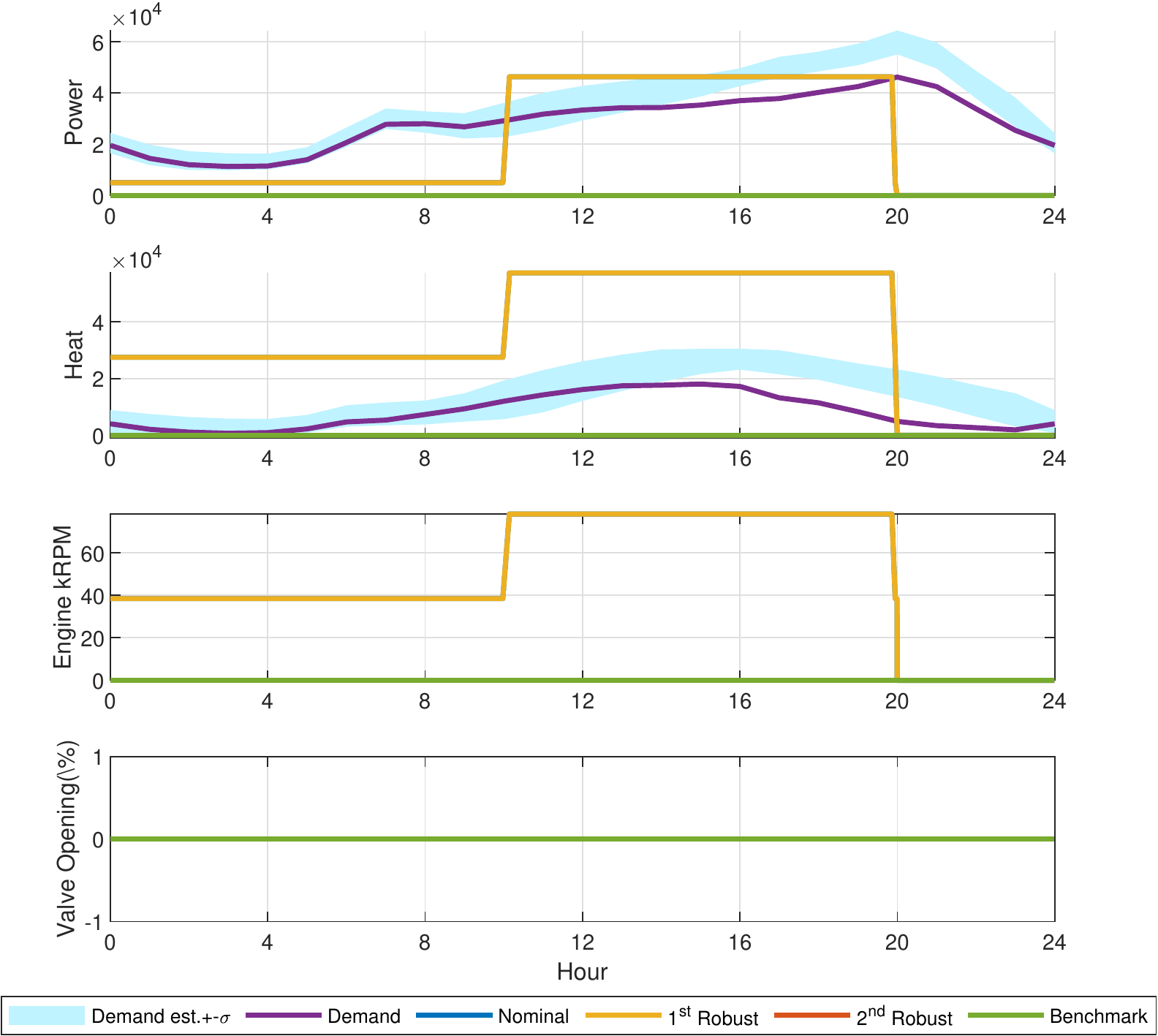}
    \caption{The schedules produced by the algorithms for the autumn day, September 19$^{\rm th}$. The first robust algorithm produces the same schedule as the nominal algorithm, and the second robust algorithm produces the same result as the benchmark.}
    \label{fig.Case4}
\end{figure}

Table \ref{table.SchedCost} summarizes the daily scheduling cost of all cases, where the most favorable forecasting solution of that day is highlighted by italics.  In each of the days, both robust algorithms perform at least as well as the nominal algorithm. Moreover, the first robust algorithm outperformed the nominal algorithm for all days with winter electricity tariff, with savings up to $\$0.89$ per day. The second robust algorithm outperformed the nominal algorithm for all days with summer electricity tariff, with savings up to $\$6.37$ per day. In fact, the second robust algorithm has exactly the same scheduling cost as the benchmark case in these days, meaning it successfully finds the global minimum.

We can consider an alternative performance metric for the robust algorithms. It is clear that the benchmark cost is the optimum over all possible schedules. The nominal algorithm is an uncertainty-agnostic algorithm which represents the baseline from which we begin, in an effort to reduce the cost. The margin in cost between the nominal algorithm and the benchmark represents the potential benefit that any robust algorithm can offer. In table \ref{table.SchedCost}, indicated in parentheses, we calculate the reduction of excess cost as a percentage of this margin, such that $100\%$ and $0\%$ reduction, implies that the robust algorithm performs identically to the benchmark and nominal cases respectively. For the four days considered, the first robust algorithm has an average reduction in excess cost of about $4\%$, and the second robust algorithm has an average reduction in excess cost of about $51\%$. This is expected considering the discussion in Section \ref{subsec.L1LinftyBalls}, as the first robust algorithm has an uncertainty set of the form \eqref{eq.Linfty} and the second robust algorithm has an uncertainty set of the form \eqref{eq.MixedNorm}.

\begin{table*}[h!]
\begin{center}
\begin{tabular}{ |c|c|c|c|c| } 
\hline
\multirow{2}{*}{\makecell{\bf Schedule Cost in \$ \\\bf (Reduction in Excess Cost in \%)}} & {\bf Winter} & {\bf Spring} &{\bf Summer}  & {\bf Autumn}\\ 
  & Feb. 5$^{\rm th}$ &
    Mar. 24$^{\rm th}$ & 
    Jun. 28$^{\rm th}$ & Sep. 19$^{\rm th}$ \\ 
  \hline
  {\bf Benchmark Case}& 293.02 & 196.86 & 188.83 & 126.48 \\ \hline
  {\bf Nominal Algorithm} & \makecell{299.39 } & \makecell{202.30} & \makecell{191.35}  & \makecell{133.32}\\ \hline
  {\bf First Robust Algorithm} & \makecell{\emph{298.48} \\ \emph{(14.29\%)}} & \makecell{\emph{202.16} \\ \emph{(2.57\%)}} & \makecell{191.35 \\ (0.00\%)} & \makecell{133.32 \\ (0.00\%)}\\ \hline
  {\bf Second Robust Algorithm} & \makecell{299.16\\(3.61\%)} & \makecell{202.30 \\ (0.00\%)} & \makecell{\emph{188.83} \\ \emph{(100.00\%)}} & \makecell{\emph{126.48} \\ \emph{(100.00\%)}}\\
  \hline
\end{tabular}
\end{center}
\caption{Schedule costs for the benchmark algorithm, the nominal algorithm, and the two robust algorithms. The reduction of excess costs for the robust algorithms compared to the nominal algorithm is displayed in parentheses. The costs associated with the best-performing algorithm in each day are highlighted by italics.}
\label{table.SchedCost}
\end{table*}

\section{Conclusion}
We considered the economic dispatch problem with uncertain demand for a single micro gas turbine, providing combined heat and power, coupled with utility. 
We considered the case in which the demand is assumed to be contained in a given uncertainty set\textcolor{black}{, and showed an equivalence between the economic dispatch problem and the robust shortest-path problem}. Two different models of an uncertainty set were proposed: one including time-dependent confidence intervals with no further assumption, and another coupling with a budgeting assumption throughout the time horizon. \textcolor{black}{Both algorithms relied on adaptations of the classical shortest-path problem, and we presented proofs for their correctness and analyses of their time complexity.} 
Both proposed algorithms were demonstrated in a case study, in which we examined their performance under realistic demand framework and tariffs of a residential unit. \textcolor{black}{Our results indicate that the robust algorithms proposed in this manuscript with forecasted demand and uncertainty sets outperform the nominal, non-robust algorithm with forecasted demand. More precisely, when the benchmark algorithm displays heat-driven behaviour, the $\mathcal{L}_\infty$-norm robust algorithm outperforms the nominal algorithm. Moreover, in electricity-driven and maintenance-cost driven settings, the mixed-norm robust algorithm outperforms the nominal algorithm, and actually} reaches the globally optimal schedule stemming from a fully known heat and power demand. 

This is a first step toward a robust integration of micro gas turbines with complex models and restrictions into a micro-grid setting, which cannot deterministically predict the future demand.
The presented methods can also be applied to the case of arrays of multiple gas turbines by defining $x(t)$ as a tuple including the states of all the turbines in the array. Unfortunately, this methods scales exponentially with the number of turbines, so it can only be applied to modest size arrays.
Future work can try to improve the performance of the algorithms on large turbine arrays by either partitioning the corresponding robust shortest path problem to multiple smaller problems, or by using dual-gradient methods, which will apply the robust algorithms described herein as intermediate steps when calculating the gradient. \textcolor{black}{Another possible avenue for future research is the development of more complex scheduling strategies which consider updates in the demand uncertainty over time. The demand uncertainty can change when a part of the unknown demand in revealed. Such methods can use a rolling-horizon, receding-horizon or a model-predictive control approach, all relying on iterative optimization over time, and will therefore rely on the robust-shortest path algorithms developed in this paper.}

\section{Acknowledgments}
The authors acknowledge the financial support of Minerva Research Center for Micro Turbine Powered Energy Systems (Max Planck Society Contract AZ5746940764),  and Startups in Energy Program of Israeli Ministry of Energy (Contract 20180805). 
The first author thanks Dean Leitersdorf for helpful discussions.

\bibliographystyle{elsarticle-num} 
\bibliography{References}

\end{document}